\documentclass[reqno,11pt]{amsart}
\usepackage{amssymb,amsmath,amsthm,amstext,amsfonts}
\usepackage{graphicx}

\setlength{\topmargin}{0cm} \setlength{\headsep}{0cm}
\setlength{\textwidth}{6in} \setlength{\textheight}{8in}
\setlength{\headheight}{0cm} \setlength{\oddsidemargin}{0cm}
\setlength{\evensidemargin}{0cm} \setlength{\footskip}{1cm}
\setlength{\oddsidemargin}{0cm}
\setlength{\evensidemargin}{0cm}

\pagestyle{plain} \pagenumbering{arabic}

 \makeatletter \@addtoreset{equation}{section} \makeatother

\theoremstyle{plain}
\newtheorem{theorem}{Theorem}[section]
\newtheorem{proposition}[theorem]{Proposition}
\newtheorem{lemma}[theorem]{Lemma}
\newtheorem{corollary}[theorem]{Corollary}
\theoremstyle{definition} \theoremstyle{remark}
\newtheorem{remark}[theorem]{Remark}
\newtheorem{definition}[theorem]{Definition}

\newcommand{\R}{{\mathbb R}}
\newcommand{\D}{{\mathbb D}}
 \newcommand{\Z}{{\mathbb Z}}
 \newcommand{\N}{{\mathbb N}}

  \newcommand{\cC}{{\mathcal C}}
  \newcommand{\cD}{{\mathcal D}}
  \newcommand{\cG}{{\mathcal G}}
  \newcommand{\cM}{{\mathcal M}}
  \newcommand{\cU}{{\mathcal U}}

 \def \fX {{\mathfrak X}}

\newcommand{\eps}{{\epsilon}}

\renewcommand{\epsilon}{{\varepsilon}}
 \newcommand{\Lip}{\operatorname{Lip}}
 \newcommand{\diver}{\operatorname{div}}
 \newcommand{\graph}{\operatorname{graph}}

\begin{document}

\title{Existence and smoothness of the stable foliation for sectional hyperbolic attractors}

\author{V. Ara\'ujo and I. Melbourne}

\address{Vitor Ara\'ujo,
 Departamento de Matem\'atica, Universidade Federal da Bahia\\
Av. Ademar de Barros s/n, 40170-110 Salvador, Brazil.}
\email{vitor.d.araujo@ufba.br,
 www.sd.mat.ufba.br/$\sim$vitor.d.araujo}

\address{Ian Melbourne,
Institute of Mathematics, University of Warwick, Coventry CV4 7AL, UK}
\email{i.melbourne@warwick.ac.uk}

\thanks{I.M. is partially supported by 
	by a European Advanced Grant StochExtHomog (ERC AdG 320977) and by CNPq
  (Brazil) through PVE grant number 313759/2014-6.  V.A.
  is partially supported by CNPq,
  PRONEX-Dyn.Syst. and FAPESB (Brazil).  We are grateful to Sheldon Newhouse for provocative and helpful questions about stable foliations.  %, and especially to Sheldon Newhouse for XXX
}

  \begin{abstract}
    We prove the existence of a contracting invariant topological foliation in a full neighborhood for partially hyperbolic attractors.  Under certain bunching conditions it can then be shown that this stable foliation is smooth.
Specialising to sectional hyperbolic attractors, we give a verifiable condition for bunching.
In particular, we show that the stable foliation for the classical Lorenz equation (and nearby vector fields) is better than $C^1$ which is crucial for recent results on exponential decay of correlations.  In fact the foliation is at least $C^{1.278}$.  
  \end{abstract}

 \date{December 21, 2016}

\maketitle

%%%%%%%%%%%%%%%%%%%%%
 \section{Introduction}  \label{sec:intro}

The goal of this paper is to establish existence and smoothness of the stable foliation for sectional hyperbolic flows.  In particular, we treat the case of the classical Lorenz equations~\cite{Lorenz63}
\begin{align*}
\dot x_1 &= 10(x_2 - x_1)
 \nonumber \\
\dot x_2 &= 28x_1 -x_2 -x_1x_3
\\
\dot x_3 &= x_1x_2 - \textstyle{\frac83}x_3
\end{align*}
showing that the stable foliation for the flow is at least $C^{1.278}$.
This regularity ($C^{1+\eps}$ for some $\eps>0$) is a crucial component of the analysis in~\cite{AMsub,AMV15} where we prove exponential decay of correlations for the Lorenz attractor.  
An immediate consequence of our result is that the stable foliation for the associated Poincar\'e map is also at least $C^{1.278}$.   
The results are robust in the sense that we obtain smoothness of the stable foliations and exponential decay of correlations for smooth vector fields that are sufficiently $C^1$-close to the classical one.

As far as we know, this is the first complete proof that the stable foliation for the classical Lorenz equations (or even for the Poincar\'e map) exists and is better than H\"older continuous.  
By~\cite{MPP04} and~\cite{Tucker}, the classical Lorenz attractor is a singular hyperbolic attractor.   A consequence is the existence of smooth stable leaves through each point of the attractor.  However, {\em a priori} it does not follow that these leaves form a topological foliation in a full neighborhood of the attractor; nor is there any information about smoothness of such a foliation.  
These issues are somewhat controversial, with various false claims in the literature.  Careful analyses (see for example~\cite{Homburg96,Robinson92,Rychlik90}) require additional conditions to establish smoothness and do not apply to the classical Lorenz attractor.   

Recently~\cite{AMV15} gave a verifiable criterion for smoothness of the stable foliation that is easily seen to hold for the classical Lorenz attractor.  However, the argument in~\cite{AMV15} presupposes that the stable leaves topologically foliate a full neighborhood of the attractor -- a fact that is folklore but for which apparently there is no proof available in the literature.

In this paper, we consider general partially hyperbolic attractors and give a complete proof of the existence of a topological foliation $\{W^s_x\}$ in a neighborhood of such attractors. The individual leaves $W^s_x$ are smoothly embedded stable manifolds.   
In general, the leaves need not vary smoothly, but under a bunching condition~\cite{HPS77} the foliation is smooth.
The argument in~\cite{AMV15} now applies, and we obtain existence and smoothness of the stable foliation for the classical Lorenz attractor.  Our results hold for the flow, and hence also for the Poincar\'e map.  This resolves an issue in~\cite[Section~2.4]{Tucker} where it is claimed that the stable foliation for the Poincar\'e map is smooth but no details are provided.

In addition, we extend the verifiable criterion of~\cite{AMV15} to the sectional hyperbolic situation, and we give a lower bound for the smoothness for the classical Lorenz attractor.    
The condition is verifiable in the sense that it depends only on the linearised vector field and the location of the attractor and its equilibria.

In Section~\ref{sec:Lorenzmodel}, we recall the notion of partially hyperbolic and sectional hyperbolic attractors.
Section~\ref{sec:cone} contains general facts about cone fields for partially hyperbolic attractors, as well as the crucial step that the stable bundle extends continuously to a contracting invariant bundle over a neighborhood of the attractor.
Section~\ref{sec:foliation} contains general results about the stable foliation of partially hyperbolic attractors.   In particular, the stable leaves define a
topological foliation of a neighborhood of the attractor and is smooth under a bunching condition.
In Section~\ref{sec:SD}, we specialise to sectional hyperbolic attractors.  Following~\cite{AMV15}, we give a verifiable condition for smoothness of the stable foliation and apply this to the classical Lorenz attractor.

%%%%%%%%%%%%%%%%%%%%%%

\section{Sectional hyperbolic attractors}
\label{sec:Lorenzmodel}

In this section, we define what is understood as a sectional
hyperbolic attractor; see e.g. \cite{AraPac2010} for an
extended presentation of this theory,

Let $M$ be a compact Riemannian manifold and
${\fX}^r(M), \, r \geq 1,\,$ be the set of $C^r$ vector
fields defined on $M$.  Let $X_t$ denote the flow generated by $G\in\fX^r(M)$.  Given a compact invariant
set $\Lambda$ for $G\in {\fX}^r(M)$, we say that $\Lambda$
is \emph{isolated} if there exists an open set $U\supset
\Lambda$ such that
$$
\textstyle \Lambda =\bigcap_{t\in \R}X_t(U).
$$
If $U$ above can be chosen such that $X_t(U)\subset U$ for
$t>0$, then we say that $\Lambda$ is an \emph{attracting set}.

Given $x \in M$, we define $\omega_G(x)$ as the set of
accumulation points of the set $\{X_tx; t \geq
0\}$ and define $\alpha_G(x)=\omega_{-G}(x)$.
A subset $\Lambda \subset M$ is \emph{transitive} if it has
a full dense orbit, that is, there is $x\in \Lambda$ such that
$\omega_G(x)=\Lambda=\alpha_G(x)$.

\begin{definition}\label{def:attractor}
  An \emph{attractor} is a transitive attracting set, and a
  \emph{repeller} is an attractor for the reversed vector
  field $-G$.
\end{definition}

\begin{definition}
\label{def:PH}
Let $\Lambda$ be a compact invariant set for $G \in {\fX}^r(M)$.  We say that
$\Lambda$ is {\em partially hyperbolic} if the
tangent bundle over $\Lambda$ can be written as a continuous $DX_t$-invariant sum 
$$
T_\Lambda M=E^s\oplus E^{cu},
$$
where $d_s=\dim E^s\ge1$ and $d_{cu}=\dim E^{cu}\ge2$,
and there exist constants $C>0$, $\lambda\in(0,1)$ such that
for every $t > 0$ and every $x \in \Lambda$, we have
\begin{itemize}
\item uniform contraction along $E^s$:
\begin{equation}\label{eq:contrai}
\|DX_t \mid E^s_x\| \le C \lambda^t;
\end{equation}

\item domination of the splitting:
\begin{equation}\label{eq:domination}
\|DX_t \mid E^s_x\| \cdot
\|DX_{-t} \mid E^{cu}_{X_tx}\| \le C \lambda^t.
\end{equation}
\end{itemize}
We refer to $E^s$ as the stable bundle and to
$E^{cu}$ as the center-unstable bundle.
\end{definition}

\begin{remark} By~\cite[Theorem 1]{Goum07}, we may suppose
  without loss that $\|\cdot\|$ is an adapted metric so that
  $C=1$.
\end{remark}

\begin{definition} \label{def:VE}
The center-unstable bundle $E^{cu}$ is \emph{volume
  expanding} if there exists $K,\theta>0$ such that
$|\det(DX_t(x)\mid E^{cu}_x)|\geq K \, e^{\theta t}$ for all
$x\in \Lambda$ and $t\geq 0$.
More generally, $E^{cu}$ is {\em sectional expanding} if 
for every two-dimensional subspace $P_x\subset E^{cu}_x$,
\begin{align*} % \label{eq:section}
|\det(DX_t(x)\mid P_x )| \ge K  e^{\theta t}\quad\text{for all 
$x \in \Lambda$, $t\ge0$}. 
\end{align*}
\end{definition}

If $\sigma\in M$ and $G(\sigma)=0$, then $\sigma$ is called an {\em equilibrium}.
An invariant set is \emph{nontrivial} if it is neither a periodic orbit nor an
equilibrium. 

\begin{definition} 
\label{def:singularset}
Let $\Lambda$ be a compact nontrivial invariant set for $G \in
{\fX}^r(M)$.  We say that $\Lambda$ is a
\emph{sectional hyperbolic set} if all the equilibria
in $\Lambda$ are hyperbolic, and $\Lambda$ is partially
hyperbolic with sectional expanding center-unstable bundle. A
sectional hyperbolic set which is also an attractor is
called a {\em {sectional hyperbolic attractor}}.

In the special case when $E^{cu}$ is volume expanding, $\Lambda$ is called a {\em singular} hyperbolic set/attractor.
\end{definition}

An isolated set $\Lambda=\Lambda_G$ for a $C^1$ vector field $G$ is
\emph{robustly transitive} if there is an open set $U\supset\Lambda$ such that
$\Lambda_{\tilde G}=\bigcap_{t\in\R}\tilde X_t(U)$
is transitive and nontrivial for any vector field $\tilde G$ $C^1$-close to $G$.

\begin{definition}
 \label{def:lorenz-like}
 An equilibrium $\sigma$ for a $3$-dimensional vector field $G$ is
 {\em Lorenz-like} if the eigenvalues $\lambda_j$, $1\leq j \leq
 3$, of $DG(\sigma)$ are real and satisfy
  $\lambda_1<\lambda_2<0<-\lambda_2<\lambda_3$.
\end{definition}

For $3$-dimensional vector fields,
 Morales, Pacifico, and Pujals proved in~\cite{MPP04} that
 any robustly transitive invariant set $\Lambda$
 containing an equilibrium is a
 singular hyperbolic attractor or repeller. Moreover,
 every equilibrium in $\Lambda$ is Lorenz-like for $G$ or $-G$, and $\Lambda$
 is proper, i.e., $\Lambda\neq M$.

Tucker~\cite{Tucker} gave a computer-assisted proof that the classical Lorenz attractor~\cite{Lorenz63} is a robustly transitive invariant set containing an equilibrium.   It then follows from~\cite{MPP04} that the classical Lorenz attractor is singular hyperbolic.

\section{Cone fields and the stable bundle for partially hyperbolic attractors}
\label{sec:cone}

In this section, we analyse stable and center-unstable cone fields in a neighborhood of a partially hyperbolic attractor $\Lambda$, and 
we show that the stable bundle $E^s$ extends to a 
continuous $DX_t$-invariant contracting bundle over a neighborhood of $\Lambda$.

Throughout, $\Lambda$ is a partially hyperbolic attractor for a vector field
$G\in {\fX}^r(M)$, $r\ge1$, with invariant splitting $T_\Lambda M =
E^s\oplus E^{cu}$ and contraction rate $\lambda\in(0,1)$.
Sectional expansion is not assumed.
Write $d=\dim M=d_s+d_{cu}$.

\subsection{Cone fields in a neighborhood of $\Lambda$}
\label{sec:conefields}

Let $U_0\subset M$ be a forward invariant neighborhood of
$\Lambda$ such that $\bigcap_{t\geq 0}X_t(U_0)=\Lambda$.
Choose a continuous (not necessarily invariant) extension
$T_{U_0}M=E^s\oplus E^{cu}$ of the splitting $T_\Lambda
M=E^s\oplus E^{cu}$.  Given $x\in U_0$ and $a>0$ we define
the cone fields
\begin{align*}
  \cC^s_x(a) 
  & =
  \{v=v^s+v^{cu}\in E^s_x\oplus E^{cu}_x:
  \|v^{cu}\|\le a\|v^s\|\}, 
  \\
  \cC^{cu}_x(a) 
  & =
  \{v=v^s+v^{cu}\in E^s_x\oplus E^{cu}_x: 
  \|v^s\|\le a\|v^{cu}\|\}.
\end{align*}

\begin{proposition} \label{p:conefield} 
Fix $T$ so that $\lambda^T=1/150$.   For any $a\in(0,\frac14]$ there
exists 
a positively invariant neighborhood $U_0$ of $\Lambda$,  
such that for all $x\in U_0$ the following hold:
\begin{itemize}
	\item[(a)] backward invariance of
  stable cones and forward invariance of center-unstable cones:
\begin{align}
\label{eq:cone-s}
& DX_{-t}\big(\cC^s_{X_tx}(b)\big) \subset \cC^s_x(b) ,
\\  \label{eq:cone-u}
& DX_t\big(\cC^{cu}_x(b)\big) \subset \cC^{cu}_{X_tx}(b),
\end{align}
for all $b\ge a$, $t\ge T$.
\item[(b)] backward expansion of stable cones and domination: 
	there exist constants \mbox{$c>0$}, $\tilde\lambda\in(0,1)$, such that
for all $t>0$,
\begin{align}
\label{eq:cone-contract}
& \|DX_{-t}(X_tx)v\|\ge  c\tilde\lambda^{-t}\|v\|
 \quad\text{for all $v\in \cC^s_{X_tx}(a)$,}
 \\
\label{eq:domincone}
& \frac{\|DX_t(x) v\|}{\|v\|}\ge c\tilde\lambda^{-t}\frac{\|DX_t(x)u\|}{\|u\|}
 \quad\text{for all nonzero 
 $v\in \cC^{cu}_x(a)$, $u\in DX_{-t}(\cC^s_{X_tx}(a)).$}
\end{align}
\end{itemize}
\end{proposition}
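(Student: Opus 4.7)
The plan is to first verify all four cone inequalities pointwise on $\Lambda$, where the splitting $E^s\oplus E^{cu}$ is $DX_t$-invariant and the adapted metric gives $\|DX_t\mid E^s_x\|\le \lambda^t$ and $\|DX_t\mid E^s_x\|\,\|DX_{-t}\mid E^{cu}_{X_tx}\|\le \lambda^t$, and then to propagate these estimates to a sufficiently small forward-invariant neighborhood $U_0$ by continuity of $DX_t$ and of an arbitrary continuous extension of the splitting to $U_0$. The role of the numerical constant $1/150=\lambda^T$ is to build enough margin into the $\Lambda$-estimates for $t\ge T$ so that the perturbation incurred by the non-invariance of the extended splitting on $U_0$ can be absorbed.

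On $\Lambda$ each inequality reduces to a short computation using the decomposition along the invariant splitting. For forward invariance of the center-unstable cone (\ref{eq:cone-u}), if $v=v^s+v^{cu}$ with $\|v^s\|\le b\|v^{cu}\|$, then $DX_tv^s\in E^s_{X_tx}$ and $DX_tv^{cu}\in E^{cu}_{X_tx}$, whence
$$
\frac{\|DX_tv^s\|}{\|DX_tv^{cu}\|}\le \|DX_t\mid E^s_x\|\,\|DX_{-t}\mid E^{cu}_{X_tx}\|\cdot\frac{\|v^s\|}{\|v^{cu}\|}\le b\lambda^t\le b/150
$$
for $t\ge T$; backward invariance (\ref{eq:cone-s}) is the same argument applied to $-G$. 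For (\ref{eq:cone-contract}), decompose $v\in\cC^s_{X_tx}(a)$: the inverse of contraction yields $\|DX_{-t}v^s\|\ge \lambda^{-t}\|v^s\|$, while domination rewritten as $\|DX_{-t}\mid E^{cu}_{X_tx}\|\le \lambda^t\,m(DX_{-t}\mid E^s_{X_tx})$ bounds $\|DX_{-t}v^{cu}\|\le a\lambda^t\|DX_{-t}v^s\|$; a triangle inequality with $a\le 1/4$ gives $\|DX_{-t}v\|\ge(3/5)\lambda^{-t}\|v\|$. For (\ref{eq:domincone}), the analogous decomposition gives $\|DX_tv\|/\|v\|\ge (3/5) m(DX_t\mid E^{cu}_x)$ for $v\in\cC^{cu}_x(a)$, while writing $u=DX_{-t}w$ for $w\in\cC^s_{X_tx}(a)$ and decomposing $w$ along the invariant splitting at $X_tx$ yields $\|DX_tu\|/\|u\|=\|w\|/\|DX_{-t}w\|\le(5/3)\|DX_t\mid E^s_x\|$; the quotient then exceeds $(9/25)m(DX_t\mid E^{cu}_x)/\|DX_t\mid E^s_x\|\ge(9/25)\lambda^{-t}$ by domination.

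To pass from $\Lambda$ to a neighborhood, fix any continuous extension of the splitting and observe that the four pointwise inequalities, written using the extended (now non-invariant) bundles, are continuous in $x$ and differ from the corresponding $\Lambda$-estimates by an error that tends to zero as $x\to\Lambda$. By compactness of $\Lambda$ and joint continuity of $DX_t$ in $(t,x)$, this error is uniformly small on any set $\overline{U_0}\times[T,T']$ once $U_0$ is chosen small enough, and the factor-$1/150$ slack of the $\Lambda$-estimates for $t\ge T$ then forces (\ref{eq:cone-s})--(\ref{eq:domincone}) on $U_0$ for $t$ in any compact subinterval of $[T,\infty)$. To cover all $t\ge T$ simultaneously, one iterates the time-$T$ map $X_T$, which maps $U_0$ into itself, and absorbs short intermediate times $r\in[0,T)$ into the $1/150$-margin using the uniform boundedness of $DX_r$ on $\overline{U_0}\times[0,T]$. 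The main technical obstacle is precisely this compounding of non-invariance error under iteration, and the large constant in $\lambda^T=1/150$ is chosen so that the pointwise slack always dominates the worst-case distortion coming from both the extension and the intermediate short times.
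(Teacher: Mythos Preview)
Your proposal is correct and follows essentially the same strategy as the paper: verify the four estimates on $\Lambda$ using invariance of the splitting and the adapted metric, transfer them to a small positively invariant neighborhood by continuity (the paper quantifies this step via local-chart identifications of tangent spaces, yielding explicit distortion factors of $2$ in norm and in cone aperture), and then iterate the time-$T$ map to reach all $t\ge T$, absorbing the intermediate times $r\in[0,T)$ into the $1/150$ slack. The paper makes explicit what your sketch leaves implicit---namely that $DX_T$ on $U_0$ must strictly \emph{shrink} cones (to aperture $b/4$) so that the bounded widening from $DX_r$ (to aperture $4b$) is recovered by the subsequent iteration---but the architecture is the same.
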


\begin{proof}
  If $v$ lies in $T_xM$ where $x\in U_0$, then we write $v=v^s+v^{cu}\in E^s_x\oplus E^{cu}_x$.
If $v\in\cC^*_x(a)$, then $(1-a)\|v^*\|\le\|v\|\le(1+a)\|v^*\|$
where throughout $*\in\{s,cu\}$.

  For $x\in\Lambda$, it follows from invariance of the splitting $E^s\oplus E^{cu}$ that $(DX_t(x)v)^*=DX_t(x)v^*$ 
  for all $v\in T_xM$ and $t\in\R$.
  
We fix the neighborhood $U_0$ as follows.
For each $x\in\Lambda$, we choose a neighborhood $U_x\subset M$ of $x$ such that
$U_x$ is diffeomorphic to $\R^d$ where $d=\dim M$.
Then $T_{U_x}M$ is identified with $U_x\times\R^d$.
Given $y_1,y_2\in U_x$, a vector $v\in \R^d$ corresponds to vectors $v_{y_j}\in T_{y_j}M$ via this identification.  Using the smoothness of the flow, we can choose $U_x$ so small that 
	$\|DX_t(y_1)v_{y_1}\|\le 2\|DX_t(y_2)v_{y_2}\|$
for all $x\in\Lambda$, $y_1,y_2\in U_x$, $v\in \R^d$, $t\in[-T,T]$.
Using moreover the continuity of the splitting $E^s\oplus E^{cu}$, for $a>0$ fixed we can ensure for all $b\ge a/8$, $t\in[-T,T]$,
that if $DX_t(y_1) v_{y_1}\in \cC^*_{X_ty_1}(b)$, then
$DX_t(y_2) v_{y_2}\in \cC^*_{X_ty_2}(2b)$.

We now fix $U_0$ to be a positively invariant neighborhood of $\Lambda$ contained in 
$\bigcup_{x\in\Lambda}U_x$.   
By construction, for every $y\in U_0$, there exists $x\in\Lambda$ such that
\begin{itemize}
\item[(i)] 
	$DX_t(x) v_x\in \cC^*_{X_tx}(b)$ implies that
	$DX_t(y) v_{y}\in \cC^*_{X_ty}(2b)$, 
\item[(ii)] 
	$DX_t(y) v_y\in \cC^*_{X_ty}(b)$ implies that
	$DX_t(x) v_x\in \cC^*_{X_tx}(2b)$, and
\item[(iii)] 	$\frac12\|DX_t(x)v_x\|\le \|DX_t(y)v_y\|\le 2\|DX_t(x)v_x\|$,
\end{itemize}
for all $v\in\R^d$, $b\ge a/8$, $t\in[-T,T]$.

We now proceed with the proof of part~(a).
  By~\eqref{eq:domination},
  \begin{align*}
  \|(DX_t(x)v)^s\| & = \|DX_t(x)v^s\|
	  \le \|DX_t|E^s_x\|\|v^s\|\le \lambda^t \|DX_{-t}|E^{cu}_{X_tx}\|^{-1}\|v^s\| \\ &
	  = \lambda^t \|(DX_t|E^{cu}_x)^{-1}\|^{-1}\|v^s\|
	  \le  \lambda^t \|(DX_t(x)v)^{cu}\|\|v^{cu}\|^{-1}\|v^s\|,
\end{align*}
  for all $x\in\Lambda$, $v\in T_xM$, $t\ge 0$.
In particular, 
$DX_t(\cC^{cu}_x(b))\subset\cC^{cu}_{X_tx}(b\lambda^t)$ for all $x\in\Lambda$, $b>0$, $t\ge0$.

Now let $y\in U_0$, $b\ge a$, $v\in\cC^{cu}_y(b)$.
We can pass to a nearby point $x\in\Lambda$ with corresponding
vector $v_x\in\cC^{cu}_x(2b)$ by~(ii).
Then $DX_t(x)v_x\in\cC^{cu}_{X_tx}(2b\lambda^t)$ for all $t\ge0$.
In particular, since $\lambda^T=1/150\le1/16$,
\[
	DX_T(x)v_x\in\cC^{cu}_{X_Tx}(b/8) \quad\text{and}\quad
	DX_t(x)v_x\in\cC^{cu}_{X_tx}(2b)\,\text{for all $t\ge0$}.
\]
By~(i),
\begin{align} \label{eq:T}
	DX_T(\cC^{cu}_y(b))\subset\cC^{cu}_{X_Ty}(b/4) \subset \cC^{cu}_{X_Ty}(b) 
	\quad\text{and}\quad
	DX_r(\cC^{cu}_y(b))\subset\cC^{cu}_{X_ry}(4b),
\end{align} 
for all $r\in[0,T]$, $y\in U_0$.

By positive invariance of $U_0$, it follows inductively from~\eqref{eq:T} that
$DX_{kT}(\cC^{cu}_y(b))\subset \cC^{cu}_{X_{kT}y}(b/4)
\subset \cC^{cu}_{X_{kT}y}(b)$ for all $y\in U_0$,
$k\in\N$.

For general $t\ge T$, write $t=kT+r$ where $k\ge1$ and $r\in[0,T)$.
	Again using positive invariance of $U_0$ together with~\eqref{eq:T},
\[
DX_t(\cC^{cu}_y(b))=DX_{kT}\cdot DX_r(\cC^{cu}_y(b))
\subset DX_{kT}(\cC^{cu}_{X_ry}(4b))\subset \cC^{cu}_{X_ty}(b).
\]
This completes the proof of~\eqref{eq:cone-u}.

The proof of~\eqref{eq:cone-s} is similar, so we only sketch the details.
Using~\eqref{eq:domination} as before, we obtain that
$DX_{-t}(\cC^s_{X_tx}(b))\subset\cC^s_x(b\lambda^t)$ for all $x\in\Lambda$, $b>0$, $t\ge0$.
Let $y\in U_0$, $b\ge a$, $v\in \cC_{X_ty}^s(b)$ where $t\ge0$,
and pass to a nearby point $x_t\in\Lambda$ such that
$v_{X_tx_t}\in \cC^s_{X_tx_t}(2b)$.
(The only difference here is the dependence of $x_t$ on $t$.)
As before, we obtain that
\[
	DX_{-T}(X_Tx_T)v_{X_Tx_t}\in\cC^s_{x_T}(b/8) \quad\text{and}\quad
	DX_{-t}(X_tx_t)v_{X_tx_t}\in\cC^s_{x_t}(2b)\,\text{for all $t\ge0$},
\]
from which it follows that
\begin{align*} 
	DX_{-T}(\cC^s_{X_Ty}(b))\subset\cC^s_y(b/4) \subset \cC^s_y(b) 
	\quad\text{and}\quad
	DX_{-r}(\cC^s_{X_ry}(b))\subset\cC^s_y(4b),
\end{align*} 
for all $r\in[0,T]$, $y\in U_0$.
The last formulas are the direct analogy to those in~\eqref{eq:T}, and the 
remainder of the proof of~\eqref{eq:cone-s}
is identical to the proof of~\eqref{eq:cone-u}.

Next we turn to part (b).  The choices of $T$ and $U_0$ are unchanged.
Recall that $a\in(0,\frac14]$ is fixed.
First we prove~\eqref{eq:cone-contract}.
Suppose that $x\in \Lambda$ and $v\in \cC^s_{X_Tx}(2a)$.
By~\eqref{eq:cone-s}, $DX_{-T}(X_Tx)v\in \cC^s_x(2a)$, so using~\eqref{eq:contrai},
\begin{align*}
	\|DX_{-T}(X_Tx)v\| & \ge (1-2a)\|(DX_{-T}(X_Tx)v)^s\|
	= (1-2a)\|(DX_T(x))^{-1}v^s\|
	\\ & \ge (1-2a)\lambda^{-T}\|v^s\|\ge (1+2a)^{-1}(1-2a)\lambda^{-T}\|v\|
\ge 50\|v\| \ge 8 \|v\|.
\end{align*}

Now let $y\in U_0$, $v\in\cC^s_{X_Ty}(a)$.   As in part~(a), we can pass to
a nearby point $x\in \Lambda$ with corresponding vector $v_x\in\cC^s_{X_Tx}(2a)$ and so 
$\|DX_{-T}(X_Tx)v_x\|   \ge 8\|v_x\|$.
Using~(iii) together with positive invariance of $U_0$, we have that
$\|DX_{-T}(X_Ty)v\|   \ge 2 \|v\|$ for all $v\in \cC^s_{X_Ty}(a)$.

By positive invariance of $U_0$ and~\eqref{eq:cone-s},
it follows inductively that
\begin{align} \label{eq:k}
	\|DX_{-kT}(X_{kT}y)v\| \ge 2^k\|v\| \quad \text{for $y\in U_0$,
	$v\in \cC^s_{X_{kT}y}(a)$, $k\ge0$.}
\end{align}
Finally, we consider the case of general $t=kT+r$ where $k\in\N$, $r\in[0,T)$.
Let $v\in\cC^s_{X_ty}(a)$.
Then $DX_{-t}(X_ty)v=DX_{-r}(X_{r}y)\,DX_{-kT}(X_ty)v$ 
so it follows from positive invariance and~\eqref{eq:k} that 
\[
	\|DX_{-t}(X_ty)v\|\ge c\|DX_{-kT}(X_{kT}(X_ry))v\|\ge c2^k  \|v\|,
\]
where $c=\inf_{r\in[0,T]}\inf_{y\in U_0}\inf_{v\in T_yM,\,v\neq0}
\|DX_{-r}(y)v\|/\|v\|>0$.
This completes the proof of~\eqref{eq:cone-contract}.

To prove~\eqref{eq:domincone}, we start from~\eqref{eq:domination} so for $x\in \Lambda$, 
$u,v\in T_xM$,
\[
	\frac{\|DX_T(x)u^s\|}{\|u^s\|}\le \|DX_T|E^s_x\|\le \lambda^T\|(DX_T|E^{cu}_x)^{-1}\|^{-1}\le \lambda^T\frac{\|DX_T(x)v^{cu}\|}{\|v^{cu}\|}.
\]
	Let $u\in DX_{-T}(\cC^s_{X_Tx}(2a))$, 
	$v\in \cC^{cu}_x(2a)$.  By~\eqref{eq:cone-s} and~\eqref{eq:cone-u},
	\[
		\frac{\|DX_T(x)v^{cu}\|}{\|v^{cu}\|}\le 
		\frac{(1+2a)\|DX_T(x)v\|}{(1-2a)\|v\|}, 
		\quad\text{and}\quad
\frac{\|DX_T(x)u\|}{\|u\|}\le 
	 \frac{(1+2a)\|DX_T(x)u^s\|}{(1-2a)\|u^s\|},
\]
and so
\[
	\frac{\|DX_T(x)u\|}{\|u\|}\le 9
	\lambda^T\frac{\|DX_T(x)v\|}{\|v\|}
	\le \frac{3}{50}\frac{\|DX_T(x)v\|}{\|v\|}
\]
for all $v\in \cC^{cu}_x(2a)$, $u\in DX_{-T}(\cC^s_{X_Tx}(2a))$.
Using~(iii), it follows that
\[
	\frac{\|DX_T(y)u\|}{\|u\|}\le 
	\frac{24}{25}\frac{\|DX_T(y)v\|}{\|v\|}
\]
for all $v\in \cC^{cu}_y(a)$, $u\in DX_{-T}(\cC^s_{X_Ty}(a))$.
For general $t\ge0$, we write $t=kT+r$, $k\ge0$, $r\in[0,T)$ and proceed as in the proof of~\eqref{eq:cone-contract}.
\end{proof}

\subsection{Stable bundle over a neighborhood of $\Lambda$}
\label{sec:Es}

Whereas the original splitting $T_\Lambda M=E^s\oplus
E^{cu}$ is $DX_t$-invariant, in general the extension
$E^{cu}$ of the center-unstable bundle cannot be
assumed to be invariant. % ; see \cite{HPS77} and also \cite[Appendix B]{BDV2004}.  
However the extension $E^s$ of the stable bundle may be chosen to
be $DX_t$-invariant:

\begin{proposition} \label{p:hatEs}
The continuous bundle $E^s$ over $U_0$ can be chosen to be
	$DX_t$-invariant and uniformly contracting:
	$\|DX_t \mid E^s_x\|\le c^{-1}\tilde\lambda^t$ for all $t\ge0$, $x\in U_0$,
where $c>0$, $\tilde\lambda\in(0,1)$ are the constants in Proposition~\ref{p:conefield}.
\end{proposition}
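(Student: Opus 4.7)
The plan is to build $\hat E^s$ as a limit of backward iterates of $E^s$, with convergence forced by the domination estimate~\eqref{eq:domincone}. For $x \in U_0$, every subspace $E^s_{X_{nT}x}$ lies inside the stable cone $\cC^s_{X_{nT}x}(a)$, so by backward invariance of stable cones (Proposition~\ref{p:conefield}(a)) the pullback
\begin{equation*}
V_n(x) \;:=\; DX_{-nT}(X_{nT}x)\bigl(E^s_{X_{nT}x}\bigr) \;\subseteq\; \cC^s_x(a)
\end{equation*}
is a well-defined $d_s$-dimensional subspace for each $n \ge 1$ (we use here positive invariance of $U_0$ to ensure all reference points $X_{nT}x$ stay in $U_0$). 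I define $\hat E^s_x := \lim_{n \to \infty} V_n(x)$, once it is shown that the limit exists in the Grassmannian.

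To prove convergence, parametrise $d_s$-dimensional subspaces of $T_yM$ lying in $\cC^s_y(a)$ as graphs of linear maps $L : E^s_y \to E^{cu}_y$ with $\|L\| \le a$, obtaining a complete metric space $\cG_y$. Pullback by $DX_{-T}$ gives a well-defined graph transform $\Phi_y : \cG_{X_Ty} \to \cG_y$ (well-defined by the same backward cone invariance). The main technical step is to show that $\Phi_y$ is a uniform contraction with rate $\le C\tilde\lambda^T$. This rests directly on~\eqref{eq:domincone}: the difference of two pulled-back graphs is measured by a vector in the $E^{cu}$-direction, while a generator of either graph sits in $\cC^{cu}_x(a)$, and~\eqref{eq:domincone} says exactly that the former grows at most a factor $\tilde\lambda^T/c$ as fast as the latter. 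Iterating $\Phi$, the sequence $V_n(x)$ is Cauchy and converges to a unique subspace $\hat E^s_x$, independent of the initial choice of $d_s$-dimensional subspace in the stable cone.

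Once $\hat E^s_x$ has been defined the remaining properties are essentially automatic. For invariance, one checks directly that $DX_T(x)V_n(x) = V_{n-1}(X_Tx)$, so passing to the limit yields $DX_T(\hat E^s_x) = \hat E^s_{X_Tx}$; invariance for arbitrary $t > 0$ follows from the semigroup property together with the fact that the limit construction does not depend on the choice of discrete step $T$ (any sequence $t_n \to \infty$ with appropriate properties gives the same limit). Continuity of $\hat E^s$ in $x$ follows from the uniform Cauchy rate combined with continuity of $E^s$, $E^{cu}$, and the flow. Uniform contraction is then immediate from~\eqref{eq:cone-contract}: for $w \in \hat E^s_x$ and $v = DX_t(x)w \in \hat E^s_{X_tx} \subseteq \cC^s_{X_tx}(a)$,
\begin{equation*}
\|w\| \;=\; \|DX_{-t}(X_tx)v\| \;\ge\; c\,\tilde\lambda^{-t}\|DX_t(x)w\|,
\end{equation*}
which rearranges to $\|DX_t(x)w\| \le c^{-1}\tilde\lambda^t \|w\|$.

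The main obstacle I expect is the contraction estimate for the graph transform $\Phi_y$. Off $\Lambda$ the background splitting $E^s \oplus E^{cu}$ is not $DX_t$-invariant, so the matrix of $DX_{-T}$ relative to it has nonzero off-diagonal blocks, and the clean invariant-splitting version of domination~\eqref{eq:domination} is unavailable. One has to work with~\eqref{eq:domincone} and absorb the off-diagonal distortion into the contraction factor, while keeping the constant independent of $x \in U_0$. The remaining items—extending invariance from integer multiples of $T$ to all $t \ge 0$, and continuity of $\hat E^s$—are routine bookkeeping once the contraction is in hand.
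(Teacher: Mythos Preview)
Your approach is correct and takes a genuinely different (though equally standard) route from the paper's. The paper defines the bundle directly as the nested intersection $F_x = \bigcap_{t\ge0} DX_{-t}\bigl(\cC^s_{X_tx}(a)\bigr)$ and then shows this intersection collapses to a single $d_s$-plane by a contradiction argument: if $F_x$ were a nontrivial cone containing some $d_s$-plane $\tilde F_x$, one could perturb $\tilde F_x$ by a nonzero vector $v\in E^{cu}_x$ and stay inside $F_x$, and~\eqref{eq:domincone} then forces incompatible growth rates. You instead prove the collapse constructively via a graph-transform contraction on the space of $d_s$-planes in the stable cone. The paper's formulation makes $DX_t$-invariance for \emph{all} $t\ge0$ immediate from the definition of $F_x$ as an intersection, whereas you have to argue separately to pass from $X_T$-invariance to $X_t$-invariance; conversely, your argument yields an explicit exponential rate $V_n(x)\to\hat E^s_x$ uniform in $x$, from which continuity follows at once rather than via a separate nested-cone argument.

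One slip in your sketch: generators of the graphs lie in $\cC^s$, not $\cC^{cu}$. What you want to say is that the \emph{difference} of two graphs over a common base vector lies in $E^{cu}_y\subset\cC^{cu}_y(a)$, while the base vector itself lies in $\cC^s_y(a)$, and domination compares the backward growth of the latter against that of the former. As you correctly flag in your final paragraph, since $E^{cu}$ is not invariant off $\Lambda$ the pulled-back difference $DX_{-T}(E^{cu}_y)$ need not sit in $\cC^{cu}_x(a)$, so~\eqref{eq:domincone} does not apply verbatim; one has to write $DX_{-T}$ in block form relative to $E^s\oplus E^{cu}$, bound the graph-transform Lipschitz constant by $\|D_x\|\,\|A_x^{-1}\|$ plus off-diagonal corrections, and control the diagonal product via domination. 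This is exactly the obstacle you identify, and it is where the real work in your approach lies.
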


\begin{proof}
  We begin with the original choice of continuous splitting
  $T_{U_0}M=E^s\oplus E^{cu}$.  Let $a\in(0,\frac14]$ and choose
  $T$ and $U_0$ as in Proposition~\ref{p:conefield}.
  For $x\in U_0$, define
  \begin{align*}
\textstyle    F_x=\bigcap_{t\ge0} DX_{-t}\big(\cC^s_{X_tx}(a)\big).
  \end{align*}
We show that $\{F_x\}$ is the desired stable bundle.  That is, we show that
for all $t\ge0$,
\begin{itemize}
\item[(i)] $x\mapsto F_x$ is a continuous map from $U_0$ to 
	the Grassmannian bundle $\cG=\{\cG_x,\,x\in U_0\}$ where
	$\cG_x$ is the space of $d_s$-dimensional subspaces of $T_xM$,
 \item[(ii)] $F_x=E^s_x$ for $x\in\Lambda$,
 \item[(iii)] $\{F_x,\,x\in U_0\}$ is $DX_t$-invariant and uniformly contracting.
\end{itemize}

Now $\{DX_{-t}(\cC^s_{X_tx}(a)),\,t\ge0\}$ 
  is a nested family of closed cones, and by~\eqref{eq:cone-s} the cones are contained in $\cC^s_x(a)$ for $t\ge T$.  In particular,
  $F_x\subset \cC^s_x(a)$.
  
  We can also regard $\{DX_{-t}(\cC^s_{X_tx}(a)),\,t\ge0\}$ 
  as a nested family of closed subsets of $\cG_x$,
  so $F_x$ is a closed subset of $\cG_x$.
  By compactness of $\cG_x$,
  the elements $DX_{-t}E^s_{X_tx}\in\cG_x$ have a convergent subsequence
  $DX_{-t_n}E^s_{X_{t_n}x}$ with limit $\tilde F_x\in\cG_x$.
  Since $DX_{-t}E^s_{X_tx}\in
  DX_{-t}(\cC^s_{X_tx}(a))$ and $F_x$ is closed, it follows that
  $\tilde F_x\in F_x$.

To summarise, we have shown that there exists a  $d_s$-dimensional subspace
$\tilde F_x$ such that $\tilde F_x\subset F_x$ and $\tilde F_x=\lim_{n\to\infty}DX_{-t_n}E^s_{X_{t_n}x}$ (in $\cG_x$).  Without loss we may suppose that $t_n\ge T$ for all $n$.

  Next we show that $F_x=\tilde F_x$.
  Choose vectors $u_n\in E^s_{X_{t_n}x}$ such that
  \mbox{$\|DX_{-t_n}(X_{t_n}x)u_n\|=1$}.
	  Suppose for contradiction that $F_x\neq\tilde F_x$.
	  Then $F_x$ is a nontrivial cone containing $\tilde F_x$, and so there exists $v\in E^{cu}_x$ nonzero such that 
	  $w_n=DX_{-t_n}(X_{t_n}x)u_n+v\in F_x$ for $n$ sufficiently large.
  It follows from the definition of $F_x$ that
  $DX_{t_n}(x)w_n=u_n+DX_{t_n}(x)v\in \cC^s_{X_{t_n}x}(a)$.
Hence
\begin{align} \label{eq:w}
	\|(DX_{t_n}(x)v)^{cu}\|\le a\|u_n+(DX_{t_n}(x)v)^s\|.
\end{align}

Since $v\in E^{cu}_x$, it follows from~\eqref{eq:cone-u} that
$DX_{t_n}(x)v\in \cC^{cu}_x(a)$ and hence
$\|(DX_{t_n}(x)v)^s\|\le a\|(DX_{t_n}(x)v)^{cu}\|$ and
$\|DX_{t_n}(x)v\|\le (1+a) \|(DX_{t_n}(x)v)^{cu}\|$.
Substituting into~\eqref{eq:w} yields
$(1-a^2)\|(DX_{t_n}(x)v)^{cu}\|\le a\|u_n\|$ and then
\[
	\|DX_{t_n}(x)v\|\le (1+a)(1-a^2)^{-1}a\|u_n\|.
\]

On the other hand, $u_n\in E^s_{X_{t_n}x}$, $v\in E^{cu}_x$, so by~\eqref{eq:domincone},
\[
	\frac{\|DX_{t_n}(x)v\|}{\|v\|}\ge 
	c\tilde\lambda^{-t_n}\frac{\|u_n\|}{\|DX_{-t_n}(X_{t_n}x)u_n\|}
	=c\tilde\lambda^{-t_n}\|u_n\|.
\]
Letting $n\to\infty$ yields the desired contradiction,
  and so $F_x$ and $\tilde F_x$ coincide.
  In particular, $F_x\in\cG_x$ for all $x\in U_0$.

To prove continuity of the map $x\mapsto F_x$, fix $x\in U_0$ and let
$\cU\subset\cG$ be a neighborhood of $F_x$.
There exists $t_0\ge0$ such that
    $\bigcap_{t\le t_0} DX_{-t}(\cC^s_{X_tx}(a))\subset\cU$.
    By smoothness of the flow,
    $F_y\subset\bigcap_{t\le t_0} DX_{-t}(\cC^s_{X_ty}(a))\subset\cU$ for $y$ sufficiently close to $x$.
    This completes the proof of (i).
    
  It is immediate from invariance of the bundle $E^s|\Lambda$ that
  $E^s_x\subset F_x$ for all $x\in\Lambda$.   Hence
  $E^s_x=F_x$ for all $x\in\Lambda$ establishing (ii).

  For $r\ge0$,
  \begin{align*}
	  DX_rF_x & \textstyle =\bigcap_{t\ge0}DX_{r-t}(\cC^s_{X_{t-r}(X_rx)}(a))
	  =\bigcap_{t\ge r}DX_{r-t}(\cC^s_{X_{t-r}(X_rx)}(a))
	  \\ & \textstyle =\bigcap_{t\ge0}DX_{-t}(\cC^s_{X_t(X_rx)}(a))
	= F_{X_rx},
\end{align*}
so the bundle $\{F_x\}$ is $DX_t$-invariant.
Finally,  if $v\in F_x$, $t\ge0$, then $DX_t(x)v\in\cC^s_{X_tx}(a)$ so
by~\eqref{eq:cone-contract}, $\|v\|\ge c\tilde\lambda^{-t}\|DX_t(x)v\|$.
Hence 
$\|DX_t \mid F_x\|\le c^{-1}\tilde\lambda^t$ so (iii) holds.
\end{proof}

From now on, we suppose that the continuous extension
$T_{U_0}M=E^s\oplus E^{cu}$ of $T_\Lambda M=E^s\oplus
E^{cu}$ is chosen so that $E^s$ is invariant and uniformly
contracted.

\begin{remark}
  \label{rmk:dominationenough}
In the definition of partial hyperbolicity, we assumed uniform contraction along $E^s$ and dominated splitting.  However the uniform contraction assumption~\eqref{eq:contrai} was used only to ensure that the extended stable bundle is uniformly contracting.

For attracting sets satisfying just the dominated splitting assumption~\eqref{eq:domination}, it still follows from the arguments above that the bundle $E^s$ over $\Lambda$ extends 
to a continuous invariant bundle over a neighborhood of $\Lambda$.
% 
  % The existence of an invariant extension of $E^s$ to $U_0$
  % was proved using only the domination assumption on the
  % original splitting. Hence such invariant extension exists
  % for all attracting sets with a dominated splitting.
\end{remark}

\begin{remark} \label{rmk-PH}
Let $r\ge1$.  A compact invariant set for a $C^r$ diffeomorphism $f:M\to M$
is {\em partially
  hyperbolic} if there is a continuous $Df$-invariant splitting
$T_\Lambda M=E^s\oplus E^{cu}$
where $\dim E^s\ge1$ and $\dim E^{cu}\ge1$,
and there exist constants $C>0$, $\lambda\in(0,1)$ such that
for every $n \ge 1$ and every $x \in \Lambda$, we have
\begin{align*}
\|Df^n \mid E^s_x\| \le C \lambda^n;\quad
\|Df^n \mid E^s_x\| \cdot
\|Df^{-n} \mid E^{cu}_{f^nx}\| \le C \lambda^n.
\end{align*}
It is easily seen that the results in this section go through for partially
hyperbolic attractors for diffeomorphisms, with $f$ playing the role of $X_1$ and $X_T$ replaced by a high enough iterate of $f$.
\end{remark}

\section{The stable foliation for partially hyperbolic attractors}
\label{sec:foliation}

In this section, we discuss the existence and regularity properties of the stable foliation associated with a partially hyperbolic attractor $\Lambda$ satisfying the conditions in Definition~\ref{def:PH}.   Sectional expansion is not assumed.
Again we focus on partially hyperbolic attractors for flows, but the situation for diffeomorphisms is the same (cf.\ Remark~\ref{rmk-PH}).

In Subsection~\ref{sec:Ws}, we prove that the stable bundle $E^s$ integrates to a contracting invariant topological foliation of a neighborhood of $\Lambda$ with smooth leaves.
In Subsection~\ref{sec:reg}, we obtain smoothness of the foliation under a suitable bunching condition.

\begin{remark}
The results in this section follow entirely from standard arguments.   However the proof that the extended stable bundle $E^s$ in Section~\ref{sec:Es} integrates to a topological foliation is complicated by the noninvariance of the complementary bundle $E^{cu}$.
Since we have been unable to find a formulation in the literature that does not assume invariance of both $E^s$ and $E^{cu}$, we present below the details of the standard arguments suitably modified.
\end{remark}

\subsection{Stable foliation in a neighborhood of $\Lambda$}
\label{sec:Ws}

Let $\D^k$ denote the $k$-dimensional open unit disk and let
$\mathrm{Emb}^r(\D^k,M)$ denote the set of $C^r$ embeddings $\phi:\D^k\to M$ endowed with the $C^r$ distance.

\begin{theorem}\label{th:Ws}
	There exists a positively invariant neighborhood $U_0$
	of $\Lambda$, and a constant $\nu\in(0,1)$, such
	that the following are true.

\vspace{1ex}
 \noindent(a)
For every point $x \in U_0$ there is a $C^r$ embedded $d_s$-dimensional disk
  $W^s_x\subset M$, with $x\in W^s_x$, such that
\begin{enumerate}
	\item $T_xW^s_x=E^s_x$.
\item $X_t(W^s_x)\subset W^s_{X_tx}$ for all $t\ge0$.
\item $d(X_tx,X_ty)\le \nu^t d(x,y)$ for all $y\in W^s_x$, $t\ge0$.
\end{enumerate}

\vspace{1ex}
\noindent(b) The disks $W^s_x$ depend continuously on $x$ in the $C^0$ topology: there is a continuous map $\gamma:U_0\to {\rm Emb}^0(\D^{d_s},M)$ such that
$\gamma(x)(0)=x$ and $\gamma(x)(\D^{d_s})=W^s_x$.

\vspace{1ex}
\noindent(c) The family of disks $\{W^s_x:x\in U_0\}$ defines a topological foliation of $U_0$.
\end{theorem}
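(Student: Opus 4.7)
The plan is to build $\{W^s_x\}$ by the classical Hadamard graph transform for stable manifolds, adapted so as to exploit the $DX_t$-invariance of $E^s$ given by Proposition~\ref{p:hatEs} while accommodating the non-invariance of the extension $E^{cu}$. First, for each $y\in U_0$ I fix a smooth chart identifying a neighbourhood of $y$ with a neighbourhood of $0\in T_yM=E^s_y\oplus E^{cu}_y\cong\R^{d_s}\times\R^{d_{cu}}$. Since $E^s$ is $DX_T$-invariant, in these charts $X_T$ has the block-triangular form $(v,w)\mapsto(f_y(v,w),g_y(v,w))$ with $g_y(v,0)\equiv0$, whose Jacobian at the origin has diagonal blocks $A_y=DX_T\mid E^s_y$ and $B_y$ (the $E^{cu}_{X_Ty}$-component of $DX_T\mid E^{cu}_y$); Proposition~\ref{p:hatEs} gives $\|A_y\|\le c^{-1}\tilde\lambda^T$, and domination from Proposition~\ref{p:conefield}(b) gives $\|B_y^{-1}\|\,\|A_y\|\le c^{-1}\tilde\lambda^T$, which is $<1$ once $T$ is taken sufficiently large.

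On the complete metric space $\mathcal{S}$ of continuous sections $h:y\mapsto h_y$ with $h_y:E^s_y(\rho)\to E^{cu}_y$, $h_y(0)=0$, and $\Lip h_y\le a$, endowed with the uniform $C^0$ distance, I define a graph transform $\mathcal{F}$ by the implicit-function prescription $X_T(\graph(\mathcal{F}(h)_y))\subset\graph(h_{X_Ty})$ near~$y$. Backward invariance of the stable cone $\cC^s(a)$ from Proposition~\ref{p:conefield}(a) ensures $\mathcal{F}(\mathcal{S})\subset\mathcal{S}$, and the block structure above makes $\mathcal{F}$ a $C^0$-contraction with rate at most $c^{-1}\tilde\lambda^T$; Banach's theorem supplies a unique fixed point $h^*$, and I set $W^s_y=\exp_y(\graph(h^*_y))$.

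Property~(2) is immediate from $\mathcal{F}(h^*)=h^*$ together with positive invariance of $U_0$ to cover intermediate times, and property~(3) follows from $TW^s_y\subset\cC^s(a)$ combined with the backward-expansion estimate~\eqref{eq:cone-contract}, taking $\nu=\tilde\lambda$. For property~(1), the subspace of $C^1$ sections satisfying $Dh_y(0)=0$ at each $y$ is closed and $\mathcal{F}$-invariant (by direct computation on the graph transform in these charts), so the uniqueness of $h^*$ forces $Dh^*_y(0)=0$ and hence $T_yW^s_y=E^s_y$. Smoothness of each individual leaf is upgraded from Lipschitz to $C^r$ by the standard bootstrapping graph transform on the Banach space of $C^r$ maps along the single forward orbit through $y$. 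Part~(b) follows from continuous dependence of the data $A_y,B_y,f_y,g_y$ on $y\in U_0$ together with the parameter-dependent form of the contraction mapping principle.

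For the topological foliation~(c), I fix $x_0\in U_0$, set $\Sigma=\exp_{x_0}(E^{cu}_{x_0}(\rho))$, and define $\Phi:\Sigma\times\D^{d_s}\to M$ by $\Phi(y,v)=\exp_y(v,h^*_y(v))$. Continuity of $\Phi$ comes from~(b); transversality of $T\Sigma\subset\cC^{cu}(a)$ and $TW^s_y\subset\cC^s(a)$ gives local injectivity at $(x_0,0)$, and invariance of domain delivers the foliation chart. The main obstacle is verifying injectivity across leaves: if $z\in W^s_{y_1}\cap W^s_{y_2}$ for distinct $y_1,y_2\in\Sigma$, property~(3) forces $d(X_ty_1,X_ty_2)\to0$, so the fixed-point uniqueness of $\mathcal{F}$ along the joint forward orbit asymptotically identifies $W^s_{y_1}$ with $W^s_{y_2}$; transversality of $T\Sigma$ with $\cC^s(a)$ then forces $y_1=y_2$ in a sufficiently small chart. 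Translating uniqueness of $h^*$ as a section into this leaf-level uniqueness is the main technical point, and it is precisely where the non-invariance of $E^{cu}$ requires care beyond the standard HPS treatment.
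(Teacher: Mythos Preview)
Your global-section graph-transform approach is genuinely different from the paper's: the paper works orbit-by-orbit, applying a Hadamard--Perron theorem (a version of \cite[Theorem~6.2.8]{KH95} with $n$-dependent rates) to the sequence of local diffeomorphisms along each forward orbit, and then proves continuity and the foliation property in separate lemmas (using Arzel\`a--Ascoli for continuity and an open--closed argument on $W^s_{y_1}\cap W^s_{y_2}$ for the foliation charts).  Your section-space approach would, if correct, deliver continuity more directly.

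However, there is a genuine gap in your contraction step.  With graphs over $E^s$ and the transform defined by $X_T(\graph(\mathcal{F}(h)_y))\subset\graph(h_{X_Ty})$, the linearised transform is $\mathcal{F}(h)_y(v)\approx B_y^{-1}h_{X_Ty}(A_yv)$, so in the uniform $C^0$ metric the Lipschitz constant of $\mathcal{F}$ is governed by $\sup_y\|B_y^{-1}\|$, \emph{not} by $\|A_y\|\le c^{-1}\tilde\lambda^T$.  Partial hyperbolicity gives only domination, not expansion of $E^{cu}$, so $\|B_y^{-1}\|$ need not be $<1$ and your $C^0$ contraction claim fails.  The fix is to replace the $C^0$ distance on $\mathcal{S}$ by the scale-weighted metric $d(h,h')=\sup_y\sup_{v\neq0}\|h_y(v)-h'_y(v)\|/\|v\|$ (equivalently, the Lipschitz seminorm of the difference, which is a complete metric on your space since $h_y(0)=0$); in that metric the contraction rate becomes $\|A_y\|\,\|B_y^{-1}\|$, which \emph{is} bounded by (a constant times) $c^{-1}\tilde\lambda^T$ via the domination estimate~\eqref{eq:domincone}, exactly as you already noted.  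Two smaller points also need more care: your argument for $Dh^*_y(0)=0$ invokes uniqueness in a $C^1$ subspace that is not closed in the metric you use, and property~(2) for non-integer multiples of $T$ requires an explicit characterisation of $W^s_x$ by exponential convergence (the paper does this in Lemma~\ref{lem:Ws}(a,b) and Corollary~\ref{cor:Ws}); your sketch for part~(c) would benefit from the same characterisation to turn ``asymptotic identification'' into $y_2\in W^s_{y_1}$.
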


To prove Theorem~\ref{th:Ws},
we begin by following the exposition in~\cite[Section 6.4(b)]{KH95}.  

Let $T>0$, $c>0$, $\tilde\lambda\in(0,1)$ be the constants in Propositions~\ref{p:conefield} and~\ref{p:hatEs}.
Increase $T>0$ if necessary so that $\hat\lambda=c^{-1}\tilde\lambda^T\in(0,1)$.   Define the diffeomorphism
  $f=X_T:U_0\to U_0$.

For each $x\in U_0$, we consider the exponential map $\exp_x:T_xM\to M$.
This transforms a small enough neighborhood of $0$ diffeomorphically onto a neighborhood of $x$, and $D\exp_x(0)=I$.

Choose orthonormal bases on $\R^{d_s}$, $\R^{d_{cu}}$.
Also for each $x\in U_0$, choose orthonormal bases on $E^s_x$ and $E^{cu}_x$.
Let $P^s_x:\R^{d_s}\to E^s_x$, 
$P^{cu}_x:\R^{d_{cu}}\to E^{cu}_x$ be the corresponding isometric isomorphisms. 
The splitting $E^s\oplus E^{cu}$ is continuous so we can arrange that
$x\mapsto P^s_x$ and $x\mapsto P^{cu}_x$ are continuous families of isomorphisms.

Define $P_{x,n}=P^s_{f^nx}+Df^n(x)P^{cu}_x:\R^d\to T_{f^nx}M$.
Note that $x\mapsto P_{x,n}$ is a continuous family of isomorphisms for each $n$.
In general
$P_{x,n}$ is not an isometric isomorphism since $Df^nE^{cu}_x$ is not necessarily orthogonal to $E^s_{f^nx}$.
However, it follows from~\eqref{eq:cone-u} that $Df^nE^{cu}_x\subset\cC^{cu}_{f^nx}(a)$ for some $a\in(0,\frac14]$, and so the angle between 
the subspaces $E^s_{f^nx}$ and $Df^nE^{cu}_x$ is bounded away from zero.
Hence there is a constant $C_1\ge1$ such that
$C_1^{-1}\le \|P_{x,n}\|\le C_1$ for all $x\in U_0$, $n\ge0$.

Next, $Q_{x,n}=\exp_{f^nx}\circ P_{x,n}:\R^d\to M$ maps a neighborhood of $0$ in $\R^d$ diffeomorphically onto a neighborhood of $f^nx$.
Again, $x\mapsto Q_{x,n}$ is a continuous family of diffeomorphisms for each $n$.

Let $D_\rho\subset\R^d$ denote the $\rho$-neighborhood of $0$.
Using boundedness of $\|P_n\|$ and compactness of $\Lambda$, and shrinking $U_0$ if necessary, we can choose $\rho>0$ so that $Q_{x,n}:D_\rho\to M$ is a diffeomorphism onto its range for all $n$.
Moreover, there is a constant $C_2\ge1$ such that
\[
C_2^{-1}\|p\|\le d(f^nx,Q_{x,n}(p))\le C_2\|p\|\;
\text{for all $x\in U_0$, $n\ge0$, $p\in D_\rho$.}
\]

Now define the family of maps $f_{x,n}=Q_{x,n+1}^{-1}\circ f\circ Q_{x,n}:D_\rho\to\R^d$.  By construction, $Df_{x,n}(0)$ is identified with $Df(f^nx)$.
Also, the maps $f_{x,n}$ are uniformly  $C^r$ close to
$Df_{x,n}(0)$ on $D_\rho$.  
Hence for any $\delta>0$ there exists $\rho>0$ and a family of (surjective) $C^r$ diffeomorphisms $g_{x,n}:\R^d\to\R^d$, $n\ge0$, such that $\|g_{x,n}-Df_{x,n}(0)\|_{C^1}<\delta$ and $g_{x,n}=f_{x,n}$ on $D_\rho$.  (See for example~\cite[Lemma 6.2.7]{KH95}.)

 % \begin{figure}[htpb]
    % \centering
    % \includegraphics[width=11.5cm]{g_xn}
    % \caption{Dynamics in adapted coordinates}
  % \end{figure}

\begin{proposition} \label{p:Dg}
	For all $n\ge0$,
\[
\|Dg_{x,n}(0)\mid \R^{d_s}\|\le \hat\lambda, \quad
\|Dg_{x,n}(0)\mid \R^{d_s}\|\cdot\|Dg_{x,n}(0)^{-1}\mid\R^{d_{cu}}\| \le
\hat\lambda.
\]
\end{proposition}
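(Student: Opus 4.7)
The plan is to compute $Dg_{x,n}(0)$ directly from the definitions and exploit the fact that the chart $P_{x,n}$ is designed so that its center-unstable component is carried by the flow. Since $g_{x,n}=f_{x,n}$ on $D_\rho$ and $0\in D_\rho$, we have $Dg_{x,n}(0)=Df_{x,n}(0)$. The chain rule applied to $f_{x,n}=Q_{x,n+1}^{-1}\circ f\circ Q_{x,n}$ at $0$, together with $Q_{x,n}(0)=f^nx$ and $DQ_{x,n}(0)=D\exp_{f^nx}(0)\circ P_{x,n}=P_{x,n}$, gives
\[
Dg_{x,n}(0)=P_{x,n+1}^{-1}\circ Df(f^nx)\circ P_{x,n}.
\]
Everything then reduces to analysing how this linear map acts on each of the factors $\R^{d_s}$ and $\R^{d_{cu}}$.

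For the stable factor, I would use that $P_{x,n}$ restricts to the isometry $P^s_{f^nx}$ on $\R^{d_s}$, that by Proposition~\ref{p:hatEs} the subspace $E^s_{f^nx}$ is $Df$-invariant, and that $P_{x,n+1}$ similarly restricts to the isometry $P^s_{f^{n+1}x}$ on $\R^{d_s}$. Composing, $Dg_{x,n}(0)$ preserves $\R^{d_s}$ and its restriction is conjugate by isometries to $Df(f^nx)\mid E^s_{f^nx}=DX_T\mid E^s_{f^nx}$. The bound $\|DX_T\mid E^s_y\|\le c^{-1}\tilde\lambda^T=\hat\lambda$ from Proposition~\ref{p:hatEs} then yields the first displayed inequality.

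For the center-unstable factor, the key observation is that $P_{x,n}$ uses $Df^n(x)P^{cu}_x$ on $\R^{d_{cu}}$, so for $w\in\R^{d_{cu}}$,
\[
Df(f^nx)P_{x,n}w=Df(f^nx)Df^n(x)P^{cu}_xw=Df^{n+1}(x)P^{cu}_xw=P_{x,n+1}w.
\]
Hence $Dg_{x,n}(0)$ also preserves $\R^{d_{cu}}$ and acts there as the identity. Consequently $\|Dg_{x,n}(0)^{-1}\mid\R^{d_{cu}}\|=1$, and the second inequality follows immediately from the first.

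No serious obstacle is anticipated: the proposition is essentially a direct computation stating that the linear part of $f_{x,n}$ in these adapted charts is block diagonal, with stable block bounded by $\hat\lambda$ and identity center-unstable block. The only mildly subtle point worth emphasising is that the non-standard definition of $P_{x,n}$ — pushing a fixed frame on $E^{cu}_x$ forward by $Df^n(x)$ rather than using a continuous frame at $f^nx$ — is precisely what trivialises the linearisation along $\R^{d_{cu}}$; without this normalisation one would need an independent quantitative center-unstable estimate.
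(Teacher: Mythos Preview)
Your argument is correct and in fact sharper than the paper's own proof. Both begin with the identity $Dg_{x,n}(0)=P_{x,n+1}^{-1}\circ Df(f^nx)\circ P_{x,n}$, and both handle the stable factor the same way (conjugation by the isometries $P^s$ and Proposition~\ref{p:hatEs}). For the center-unstable factor, however, the paper writes $\|Dg_{x,n}(0)^{-1}\mid\R^{d_{cu}}\|=\|Df^{-1}\mid Df^{n+1}E^{cu}_x\|$ and then invokes the cone domination estimate~\eqref{eq:domincone} to bound the product by $\hat\lambda$. You instead observe that because the $\R^{d_{cu}}$ component of $P_{x,n}$ is defined by pushing forward a fixed frame via $Df^n(x)$, the conjugated map is literally the identity on $\R^{d_{cu}}$, so the second inequality reduces to the first.

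Your route is more elementary (no appeal to~\eqref{eq:domincone}) and also slightly more rigorous: the paper's displayed equality on the $cu$ factor is not an equality of operator norms in the Euclidean norm on $\R^{d_{cu}}$, since $P_{x,n}\mid\R^{d_{cu}}=Df^n(x)P^{cu}_x$ is not an isometry; your computation bypasses this issue entirely. The paper's approach would be the natural one if $P_{x,n}$ had been built from an isometric frame at $f^nx$ on both factors, in which case one genuinely needs domination; you correctly isolate that the non-standard choice of chart is exactly what makes domination unnecessary here.
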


\begin{proof}
Choose $a$ as in Proposition~\ref{p:conefield}.
By construction, $Dg_{x,n}(0)=Df_{x,n}(0)$ is identified with $Df(f^nx)$ and moreover,
\begin{align*}
	& \|Dg_{x,n}(0)\mid \R^{d_s}\|=\|Df\mid E^s_{f^nx}\|=\|DX_T\mid DX_{-T}E^s_{X_Tf^nx}\|,  \\
	& \|Dg_{x,n}(0)^{-1}\mid \R^{d_{cu}}\|
=\|Df^{-1}\mid Df^{n+1}E^{cu}_x\|
\le \|DX_{-T}\mid DX_T(\cC^{cu}_{f^nx}(a))\|,
\end{align*}
where we have used invariance of $E^s$ and forward invariance of $\cC^{cu}(a)$.
The second estimate follows from~\eqref{eq:domincone}.
The first estimate is immediate from Proposition~\ref{p:hatEs}.
\end{proof}

  We require a slightly modified version of the Hadamard-Perron Invariant Manifold Theorem from \cite[Theorem 6.2.8, pp 242-257]{KH95}:  

  \begin{lemma}\label{lem:HP}
	  Let $r\ge1$.
	  Fix $\lambda_{min}>0$, $\sigma\in(0,1)$.
    Then there exists $\gamma$, $\delta>0$ arbitrarily small so that
the following holds:

    For each $n$ let $g_n:\R^d\to\R^d$ be a $C^r$
    diffeomorphism such that
    \begin{align} \label{eq:gn}
      g_n(u,v)=(A_nu+\alpha_n(u,v), B_nv+\beta_n(u,v)),
      \quad
      (u,v)\in\R^{d_s}\oplus\R^{d_{cu}},
    \end{align}
    for linear maps $A_n:\R^{d_s}\to\R^{d_s},
    B_n:\R^{d_{cu}}\to\R^{d_{cu}}$ and $C^r$ maps
    $\alpha_n:\R^d\to\R^{d_s}, \beta_n:\R^d\to\R^{d_{cu}}$ with
    $\alpha_n(0,0)=0$, $\beta_m(0,0)=0$ and
$\|\alpha_n\|_{C^1}<\delta$, $\|\beta_n\|_{C^1}<\delta$.

Define $\lambda_n=\|A_n\|$, $\mu_n=\|B_n^{-1}\|^{-1}$ and
suppose that
$\lambda_n\ge\lambda_{min}$ and $\lambda_n/\mu_n\le\sigma$.
Set $\lambda_n'=(1+\gamma)(\lambda_n+\delta(1+\gamma))$,
$\mu_n'=\frac{\mu_n}{1+\gamma}-\delta$ and suppose that
$\lambda'_n<\nu_n<\mu'_n$ for all $n\in\Z$.

Then there exists a unique family of $d_s$-dimensional $C^1$ manifolds
 $Z_n=\{(x,\varphi_n(x)):x\in\R^{d_s}\}$,
 where
 $\varphi_n:\R^{d_s}\to\R^{d_{cu}}$ satisfies
 $\varphi_n(0,0)=0$, $D\varphi_n(0,0)=0$, and
 $\|D\varphi_n\|_{C^0}<\gamma$
 for all $n\in\Z$, and the following properties hold
for all $n\in\Z$:
 \begin{enumerate}
 \item $g_n(Z_n)=Z_{n+1}$,
 \item $\|g_n(q)\|\le \lambda_n'\|q\|$ for $q\in Z_n$,
 \item If
   $\|g_{n+k-1}\circ\cdots\circ g_n(q)\|\le C\nu_{n+k-1}\dots\nu_n\|q\|$ for
   all $k\ge0$ and some $C>0$, then $q\in Z_n$.
 \end{enumerate}
 If $\sup_n\lambda_n<1$, then the manifolds $Z_n$ are $C^r$.
  \end{lemma}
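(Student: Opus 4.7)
The plan is to follow the standard graph-transform proof of the Hadamard-Perron theorem (KH95, Theorem 6.2.8), adapted to the bi-infinite non-autonomous setting and the explicit rate bounds $\lambda'_n, \mu'_n$ stated here. Let $\mathcal{E}$ be the complete metric space of sequences $\varphi=(\varphi_n)_{n\in\Z}$ of Lipschitz maps $\varphi_n\colon\R^{d_s}\to\R^{d_{cu}}$ satisfying $\varphi_n(0)=0$ and $\Lip\varphi_n\le\gamma$, equipped with an appropriately weighted sup-norm. Define the graph transform $\Phi\colon\mathcal{E}\to\mathcal{E}$ by setting $(\Phi\varphi)_n(u)=v$, where $v$ is the unique solution of the implicit equation
\[
v=B_n^{-1}\bigl(\varphi_{n+1}(A_nu+\alpha_n(u,v))-\beta_n(u,v)\bigr);
\]
the right-hand side is a contraction in $v$ with constant $\mu_n^{-1}\delta(1+\gamma)<1$ for small $\delta$ (uniform in $n$ since $\mu_n\ge\lambda_n/\sigma\ge\lambda_{\min}/\sigma$), so $v$ exists and is unique, and geometrically this says $\graph(\Phi\varphi)_n=g_n^{-1}(\graph\varphi_{n+1})$. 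Differentiating the defining equation and using $\lambda_n/\mu_n\le\sigma$ together with the smallness of $\delta$, one verifies that $\Phi$ preserves $\mathcal{E}$.

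Next I would check that $\Phi$ is a uniform contraction on $\mathcal{E}$ (essentially because $\mu_n^{-1}\lambda_n\le\sigma<1$ with the nonlinear perturbations contributing $O(\delta)$), yielding a unique fixed point $\varphi^\ast$. The manifolds $Z_n=\graph\varphi_n^\ast$ then satisfy $g_n(Z_n)=Z_{n+1}$ since $g_n$ is a bijection of $\R^d$ and $Z_n=g_n^{-1}(Z_{n+1})$. Property (2) follows by estimating the $\R^{d_s}$-component of $g_n(u,\varphi^\ast_n(u))$, which has norm at most $(\lambda_n+\delta(1+\gamma))\|u\|$, combined with $\|q\|\le(1+\gamma)\|u\|$ and a matching bound for $\|g_n(q)\|$ via its $\R^{d_s}$-component. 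Property (3) is the standard complementary-graph/cone argument: any $q\notin Z_n$ has a nonzero transverse component that is expanded at rate at least $\mu_n'>\nu_n$, contradicting the assumed decay of the forward orbit.

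For the $C^r$ smoothness when $\sup_n\lambda_n<1$, I would apply the Hirsch-Pugh-Shub fibre contraction theorem: lift $\Phi$ to a map on the Banach bundle of pairs $(\varphi_n,L_n)$, where $L_n$ is a candidate for $D\varphi_n$; formal differentiation of the defining equation yields a fixed-point problem for $L_n$ whose contraction constant is bounded by $\sigma+O(\delta)$ under the hypothesis $\sup_n\lambda_n<1$; iterating this construction $r$ times gives $C^r$ regularity. I expect the main technical burden to lie in keeping the contraction estimates uniform in $n$ under varying rates, which is handled by the uniform bounds $\lambda_n\ge\lambda_{\min}$ and $\lambda_n/\mu_n\le\sigma$; apart from this book-keeping, the argument follows KH95 verbatim.
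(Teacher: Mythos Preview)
Your proposal is correct and takes essentially the same approach as the paper: the paper's proof consists entirely of citing \cite[Theorem~6.2.8]{KH95} and observing that, although the rates $\lambda_n,\mu_n$ now depend on $n$, the uniform control $\lambda_n\ge\lambda_{\min}$ and $\lambda_n/\mu_n\le\sigma$ makes the argument in \cite{KH95} go through verbatim, which is precisely your concluding remark. Your sketch simply unpacks what the graph-transform argument of \cite{KH95} looks like, so there is no substantive difference in strategy.
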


  \begin{proof}
    The only difference from \cite[Theorem 6.2.8, pp 242-257]{KH95} is
    that
    the rates $\lambda_n,\mu_n$ may depend on $n$.
      However, the ratios $\lambda_n/\mu_n$ 
      are controlled uniformly, and
    it is easy to check that the proof in pp 242-257 of \cite{KH95}
    is valid in this slightly more general setting with no
    change in the arguments.
  \end{proof}

\begin{remark}  The constraints on $\gamma$ and $\delta$ can be made explicit:
\[
	\gamma<\min\{1,\sigma^{-1/2}-1\}, \quad
\delta<\lambda_{min}\min\Bigl\{\frac{\sigma^{-1}-1}{\gamma+\gamma^{-1}+2}
,\frac{\sigma^{-1}-(1+\gamma)^2}{(2+\gamma)(1+\gamma)}\Bigr\}.
\]
\end{remark}

\begin{remark}
	In Lemma~\ref{lem:HP}, 
 there exists also a unique family of $d_{cu}$-dimensional $C^1$ manifolds
 $\tilde Z_n=\{(x,\psi_n(x)):x\in\R^{d_{cu}}\}$ satisfying analogous properties
 to the family $Z_n$.
 This leads to a family of center-unstable manifolds $\{W^{cu}_x,\,x\in\Lambda\}$ each of which is tangent at $x$ to $E^{cu}_x$.   These manifolds do not play a role in this paper.   (Unlike the case for stable manifolds, there is no useful notion of $W^{cu}_x$ for $x\not\in\Lambda$.)
 \end{remark}

Next, we verify the hypotheses of Lemma~\ref{lem:HP}.
Fix $x\in U_0$.
The sequence of diffeomorphisms $g_{x,n}:\R^d\to\R^d$ is defined for 
$n\ge0$.  For $n<0$, we set $g_{x,n}=g_{x,0}$. 
The diffeomorphisms $g_{x,n}$ have the structure in~\eqref{eq:gn}.
Take $\sigma=\hat\lambda\in(0,1)$ and
    $\lambda_{min}=\inf_{x\in U_0}\|DX_T\mid E^s_x\|>0$.
By Proposition~\ref{p:Dg}, the linear maps $A_n$, $B_n$ satisfy the constraints
$\lambda_{min}\le \lambda_n\le\sigma$ and $\lambda_n/\mu_n\le\sigma$.
Choose $\gamma$, $\delta>0$ so small that
$\sup_n\lambda_n'<1$ and $\sup_n\lambda_n'/\mu_n'<1$.
Choose $\nu_n\in(\lambda_n',\mu_n')$
such that $\nu=\sup_n\nu_n<1$.
Finally, shrink $\rho$ so that 
$\|\alpha_n\|_{C^1}<\delta$, $\|\beta_n\|_{C^1}<\delta$.

We have shown that the hypotheses of Lemma~\ref{lem:HP} are satisfied,
with $\nu_n\le \nu<1$ for all $n$.  
Let $Z_{x,n}$ denote the 
family of $d_s$-dimensional $C^r$ manifolds in Lemma~\ref{lem:HP} and
  define 
\[
W^s_x=Q_{x,0}(Z_{x,0} \cap  D_\rho).
\]
Repeating the construction for every $x\in U_0$, we obtain a family 
$\{W^s_x,\,x\in U_0\}$ of $d_s$-dimensional $C^r$ manifolds covering $U_0$.
We claim that this is the desired family of stable manifolds.

\begin{lemma}  \label{lem:Ws}
Let $x,y\in U_0$.
\begin{itemize}
	\item[(a)] If $d(x,y)<C_2^{-1}\rho$ and
	$y\in W^s_x$ then
$d(f^nx,f^ny)\le C_2^2\nu^n d(x,y)$ for all $n\ge0$.
\item[(b)] Let $C>0$.  If $d(x,y)<C_2^{-1}C^{-1}\rho$ and
$d(f^nx,f^ny)\le C\nu^n d(x,y)$ for all $n\ge0$, then $y\in W^s_x$.
\item[(c)] There exists $\eps>0$ such that if $d(x,y)<\eps$ and
$y\in W^s_x$ then
$fy\subset W^s_{fx}$.
\end{itemize}
\end{lemma}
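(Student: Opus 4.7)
My plan is to work entirely in the HP charts $Q_{x,n}:D_\rho\to M$ and translate every condition on $f$-orbits $(f^nx,f^ny)$ in $M$ into a condition on the corresponding $g_{x,n}$-orbits in $\R^d$, exploiting $W^s_x=Q_{x,0}(Z_{x,0}\cap D_\rho)$ and the fact that $g_{x,n}$ agrees with $f_{x,n}=Q_{x,n+1}^{-1}\circ f\circ Q_{x,n}$ on $D_\rho$.

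For part \textbf{(a)}, I would write $y=Q_{x,0}(q_0)$ with $q_0\in Z_{x,0}\cap D_\rho$; the comparison bound on $Q_{x,0}$ gives $\|q_0\|\le C_2 d(x,y)$. Property~(2) of Lemma~\ref{lem:HP} produces the $g$-orbit $q_n:=g_{x,n-1}\circ\cdots\circ g_{x,0}(q_0)\in Z_{x,n}$ with $\|q_n\|\le\nu^n\|q_0\|$, so every $q_n$ lies in $D_\rho$ and hence $g_{x,n}(q_n)=f_{x,n}(q_n)$. An induction identifies $q_n=Q_{x,n}^{-1}(f^ny)$, and the comparison bound for $Q_{x,n}$ then gives $d(f^nx,f^ny)\le C_2\|q_n\|\le C_2^2\nu^n d(x,y)$.

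For part \textbf{(b)}, the hypothesis $d(f^nx,f^ny)\le C\nu^n d(x,y)\le Cd(x,y)<C_2^{-1}\rho$ places each $f^ny$ in the image of $D_\rho$ under $Q_{x,n}$, so I may set $q_n:=Q_{x,n}^{-1}(f^ny)\in D_\rho$ with $\|q_n\|\le C_2 C\nu^n d(x,y)\le C_2^2 C\nu^n\|q_0\|$ (using $d(x,y)\le C_2\|q_0\|$). Since $q_n\in D_\rho$, these are the $g$-iterates of $q_0$, and the uniqueness characterization~(3) of Lemma~\ref{lem:HP} then yields $q_0\in Z_{x,0}$, so $y\in W^s_x$. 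For part \textbf{(c)}, I would combine (a) and (b): letting $L_0$ be a bi-Lipschitz constant for $f=X_T$ on $U_0$, part~(a) gives $d(f^n(fx),f^n(fy))=d(f^{n+1}x,f^{n+1}y)\le C_2^2\nu^{n+1}d(x,y)\le (C_2^2 L_0\nu)\nu^n d(fx,fy)$, using $d(x,y)\le L_0 d(fx,fy)$. Taking $\eps<\rho/(C_2^3 L_0^2\nu)$ ensures $d(fx,fy)\le L_0\eps<C_2^{-1}(C_2^2 L_0\nu)^{-1}\rho$, so applying~(b) with $x\mapsto fx$, $y\mapsto fy$, $C=C_2^2 L_0\nu$ gives $fy\in W^s_{fx}$.

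The main technical point is the use of characterization~(3) in Lemma~\ref{lem:HP}: its decay condition involves the product $\nu_{n-1}\cdots\nu_0$ whereas my estimate in~(b) provides only the weaker uniform bound $\nu^n=(\sup_j\nu_j)^n\ge\nu_{n-1}\cdots\nu_0$. This is handled either by choosing $\nu_n$ constant when the spectral gap permits ($\sup\lambda_n'<\inf\mu_n'$), or more generally by the standard dichotomy: if $q_0=(u_0,v_0)\notin Z_{x,0}$, write $\hat q_0=(u_0,\varphi_{x,0}(u_0))\in Z_{x,0}$ so that $w_n=q_n-\hat q_n$ initially lies in $\{0\}\oplus\R^{d_{cu}}$; the dominated splitting for $g_{x,n}$ forces $w_n$ to expand at a rate strictly exceeding $\nu$, which together with $\|\hat q_n\|\to 0$ contradicts the uniform bound $\|q_n\|\le C'\nu^n\|q_0\|$.
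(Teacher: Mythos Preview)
Your proof follows the same route as the paper's: pass to the Hadamard--Perron charts $Q_{x,n}$, use properties (1)--(3) of the manifolds $Z_{x,n}$ for (a) and (b), and then combine (a) and (b) via a bi-Lipschitz bound for $f$ to obtain (c). You are in fact more scrupulous than the paper on one point: in (b) the paper applies Lemma~\ref{lem:HP}(3) directly to the bound $C_2^2C\nu^n\|q\|$ without commenting on the discrepancy with the product $\nu_{n-1}\cdots\nu_0$ appearing in the hypothesis, whereas you flag this and indicate how to resolve it. (One cosmetic remark on (c): you should also impose $\eps\le C_2^{-1}\rho$ so that part~(a) is applicable; the paper gets this for free by taking $\eps=C_2^{-5}E^{-1}\rho$ and bounding $d(fx,fy)\le C_2^2 d(x,y)$ via part~(a) with $n=1$ rather than a separate Lipschitz constant.)
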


\begin{proof}
Let 
\[
F_{x,n}=f_{x,n-1}\circ\cdots\circ f_{x,0}, \quad
G_{x,n}=g_{x,n-1}\circ\cdots\circ g_{x,0}.
\]
Note that if $F_{x,n}(q)\in D_\rho$ for all $0\le n\le N_0$, or
if $G_{x,n}(q)\in D_\rho$ for all $0\le n\le N_0$, then
$F_{x,n}(q)=G_{x,n}(q)$ for all $0\le n\le N_0$.

\vspace{1ex}
\noindent(a)
Let $y\in W^s_x$ with $d(x,y)<C_2^{-1}\rho$.
Then $q=Q_{x,0}^{-1}(y)\in Z_{x,0}$, so by Lemma~\ref{lem:HP}(1,2),
\[
	\|G_{x,n}(q)\|\le \nu^n\|q\|=\nu^n\|Q_{x,0}^{-1}(y)\|\le \nu^nC_2 d(x,y)<\rho,
\]
for all $n\ge0$.  
Now $f^n=Q_{x,n}\circ F_{x,n}\circ Q_{x,0}^{-1}$, so
\[
	f^ny=Q_{x,n}\circ F_{x,n}(q)
	=Q_{x,n}\circ G_{x,n}(q).
\]
Hence
\[
	d(f^nx,f^ny)=d(f^nx,Q_{x,n}\circ G_{x,n}(q))
	\le C_2\|G_{x,n}(q)\|\le C_2^2\nu^n d(x,y).
\]

\vspace{1ex}
\noindent(b)
Suppose that $d(x,y)<C_2^{-1}C^{-1}\rho$ and $d(f^nx,f^ny)\le C\nu^n d(x,y)$
for all $n\ge0$.  Let $q=Q_{x,0}^{-1}(y)$ so $d(x,y)\le C_2\|q\|$.
Now $F_{x,n}=Q_{x,n}^{-1}\circ f^n\circ Q_{x,0}$, so
\[
\|F_{x,n}(q)\|=
\|Q_{x,n}^{-1}\circ f^n(y)\|
\le C_2d(f^nx,f^ny)\le C_2C\nu^n d(x,y)<\rho.
\]
Hence
\[
\|G_{x,n}(q)\|=\|F_{x,n}(q)\|\le C_2C\nu^n d(x,y)\le C_2^2C\nu^n \|q\|.
\]
By Lemma~\ref{lem:HP}(3), $q\in Z_{x,0}\cap D_\rho$ and so $y=Q_{x,0}(q)\subset W^s_x$. 

\vspace{1ex}
\noindent(c)
Let $x'=fx$, $y'=fy$ and choose $E\ge1$ such that
$d(x,y)\le Ed(x',y')$ for all $x,y\in U_0$.

Suppose that $y\in W^s_x$ and $d(x,y)<C_2^{-5}E^{-1}\rho$.  Then certainly, $d(x,y)<C_2^{-1}\rho$, so by part~(a),
\[
d(f^nx',f^ny')=d(f^{n+1}x,f^{n+1}y)\le C_2^2\nu^{n+1}d(x,y)\le 
C_2^2E\nu^n d(x',y')=
C\nu^n d(x',y'),
\]
where $C=C_2^2E$.
Also, $d(x',y')\le C_2^2d(x,y)<C_2^{-3}E^{-1}\rho=C_2^{-1}C^{-1}\rho$, so the result follows from part (b).
\end{proof}

\begin{lemma}  \label{lem:Ws2}
	The $C^r$ embedded disks $W^s_x$ depend continuously on $x$ in the $C^0$ topology: there is a continuous map $\gamma:U_0\to {\rm Emb}^0(\D^{d_s},M)$ such that
$\gamma(x)(0)=x$ and $\gamma(x)(\D^{d_s})=W^s_x$.
Moreover, there exists $L\ge1$ such that $\Lip\gamma(x)\le L$ for all $x\in U_0$, where
$\Lip\gamma(x)=\sup_{u\neq u'}d(\gamma(x)(u),\gamma(x)(u'))/\|u-u'\|$.
\end{lemma}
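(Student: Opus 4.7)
The plan is to realise $\gamma(x)$ explicitly in the graph coordinates of the Hadamard--Perron construction, obtain the uniform Lipschitz bound from the cone bound $\|D\varphi_x\|_{C^0}<\gamma$, and then derive $C^0$-continuity from a subsequence extraction whose limit is identified through the dynamical characterization in Lemma~\ref{lem:Ws}(b). By Lemma~\ref{lem:HP}, for each $x\in U_0$ the manifold $Z_{x,0}$ is the graph of a $C^1$ function $\varphi_x:\R^{d_s}\to\R^{d_{cu}}$ with $\varphi_x(0)=0$, $\|D\varphi_x\|_{C^0}<\gamma$, and $W^s_x=Q_{x,0}(Z_{x,0}\cap D_\rho)$. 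I would fix $\rho'\in(0,\rho/2)$ small (to be reduced once more) and set
\[
\gamma(x)(u)=Q_{x,0}\bigl(\rho' u,\,\varphi_x(\rho' u)\bigr),\qquad u\in\D^{d_s},
\]
so that $\gamma(x)(0)=x$ and $\gamma(x)(\D^{d_s})\subset W^s_x$. Since $Q_{x,0}$ is a continuous family of diffeomorphisms whose derivative is bounded uniformly in $x\in U_0$ on $\overline{D_\rho}$ (using $\|P_{x,0}\|\le C_1$, smoothness of $\exp$, and compactness of $\overline{U_0}$), and $\|D\varphi_x\|_{C^0}<\gamma<1$, the chain rule gives $\|D\gamma(x)\|_{C^0}\le L$ for some $L\ge1$ independent of $x$, so $\Lip\gamma(x)\le L$. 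I would shrink $\rho'$ once more so that $L\rho'<C_2^{-3}\rho$, which will ensure the smallness thresholds in Lemma~\ref{lem:Ws} below.

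For the $C^0$-continuity, fix $x_*\in U_0$, $x_n\to x_*$, and $u\in\D^{d_s}$; by equicontinuity it suffices to prove $\gamma(x_n)(u)\to\gamma(x_*)(u)$. Set $y_n=\gamma(x_n)(u)\in W^s_{x_n}$ and extract a subsequence with $y_n\to y_*\in M$ (possible since $d(x_n,y_n)\le L\rho'$). \emph{Step 1:} Lemma~\ref{lem:Ws}(a) gives $d(f^k x_n,f^k y_n)\le C_2^2\nu^k d(x_n,y_n)$ for all $k\ge0$; letting $n\to\infty$ yields $d(f^k x_*,f^k y_*)\le C_2^2\nu^k d(x_*,y_*)$, and since $d(x_*,y_*)\le L\rho'<C_2^{-3}\rho$, Lemma~\ref{lem:Ws}(b) with $C=C_2^2$ forces $y_*\in W^s_{x_*}$. \emph{Step 2:} the identity $Q_{x_n,0}^{-1}(y_n)=(\rho' u,\varphi_{x_n}(\rho' u))$, together with joint continuity of $(x,p)\mapsto Q_{x,0}^{-1}(p)$ (since $Q_{x,0}=\exp_x\circ P_{x,0}$), forces $Q_{x_*,0}^{-1}(y_*)$ to have first coordinate $\rho' u$; but $y_*\in W^s_{x_*}$ means $Q_{x_*,0}^{-1}(y_*)\in Z_{x_*,0}=\{(v,\varphi_{x_*}(v))\}$, so the first coordinate determines the point and $y_*=\gamma(x_*)(u)$. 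Every subsequential limit equals $\gamma(x_*)(u)$, hence the full sequence converges pointwise; equicontinuity then upgrades this to uniform convergence on $\D^{d_s}$.

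The hard part will be Step~2: Lemma~\ref{lem:HP} is stated pointwise in the sequence $(g_n)$ and does not directly provide continuity of $x\mapsto\varphi_x$, so continuity of $\gamma$ has to be extracted from the dynamical characterization of $W^s$ rather than from the Hadamard--Perron theorem itself. The uniform cone bound $\|D\varphi_x\|_{C^0}<\gamma$ plays the crucial role: it makes the first coordinate of $Q_{x,0}^{-1}$ an injective graph parameter on each local stable manifold, so that the base coordinate $u$ is preserved under the limit $n\to\infty$ and pins down $y_*$ uniquely.
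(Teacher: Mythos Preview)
Your argument is correct and follows essentially the same route as the paper: both proofs obtain the uniform Lipschitz bound from the graph estimate $\|D\varphi\|<\gamma$, and both establish $C^0$-continuity by passing to a subsequential limit, using Lemma~\ref{lem:Ws}(a) to propagate the contraction estimate to the limit, and then invoking Lemma~\ref{lem:Ws}(b) to pin the limit down on the correct stable disk. The only cosmetic difference is that the paper works in a \emph{single} chart $Q_{x,0}$ at a fixed basepoint, representing nearby $W^s_y$ as graphs $\phi_y$ there and applying Arzel\`a--Ascoli to the family $\{\phi_y\}$, whereas you parametrize each $W^s_x$ in its own chart $Q_{x,0}$ and use joint continuity of $(x,p)\mapsto Q_{x,0}^{-1}(p)$ to track the first coordinate through the limit; your version has the advantage that $\gamma$ is globally and canonically defined. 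One small bookkeeping slip: since $\rho'$ is already built into the definition of $\gamma(x)$, the diameter bound is $d(x_n,y_n)\le L$ (with $L=O(\rho')$), not $L\rho'$, so the smallness condition you need is $L<C_2^{-3}\rho$, which is indeed achievable by shrinking $\rho'$.
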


\begin{proof}
	Fix $x\in U_0$ and recall that
	$W^s_x=Q_{x,0}(Z_{x,0}\cap D_\rho)$.
	For $y$ close to $x$, let $A_y=Q_{x,0}^{-1}(W^s_y)$.
Let $p_y=Q_{x,0}^{-1}(y)=Q_{x,0}^{-1}\circ Q_{y,0}(0)\in A_y$.
	In particular $A_x=Z_{x,0}\cap D_\rho$ and $p_x=0$.
Moreover, $y\mapsto p_y$ is continuous.

Now $T_{p_y}A_y= DQ_{x,0}^{-1}(y)T_yW^s_y=
DQ_{x,0}^{-1}(y)E^s_y $, so it follows from smoothness of
$Q_{x,0}$ and continuity of $E^s$ that $A_y$ can be viewed
as a graph over $\D^{d_s}\subset\R^{d_s}$ for $y$ close to~ $x$.  
In particular, $A_y=\{(u,\phi_y(u)):u\in \D^{d_s}\}$
where $\phi_y:\D^{d_s}\to\R^{d_{cu}}$, see Figure~\ref{fig:phi}.  Hence
$W^s_y=\{Q_{x,0}(u,\phi_y(u)):u\in\D^{d_s}\}$.  The family
of functions $\phi_y$ are $C^r$ with uniform Lipschitz
constant.  Since $p_y\in A_y$, there exists $u_y\in\D^{d_s}$
such that $p_y=(u_y,\phi_y(u_y))$.

  \begin{figure}[htpb]
    \centering
    \includegraphics[width=6cm]{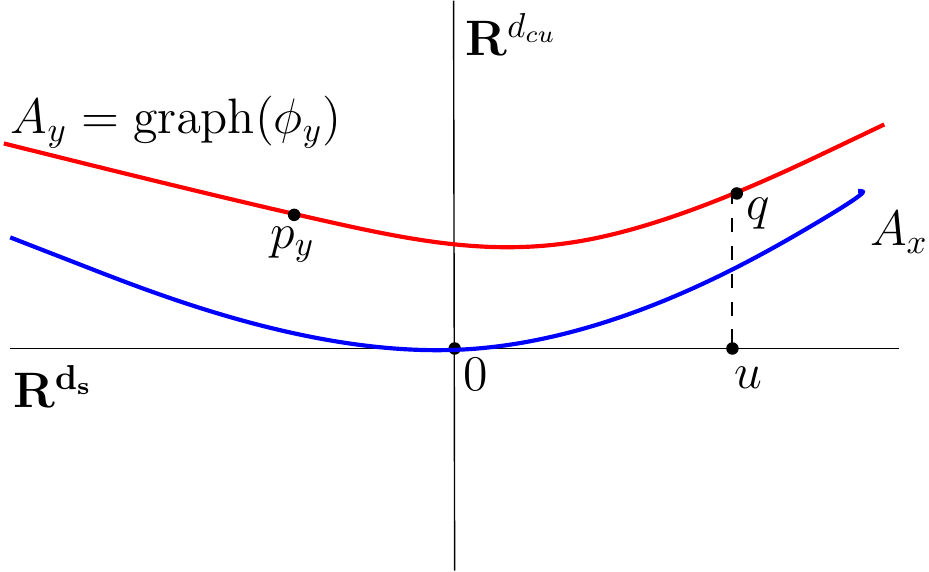}
    \caption{\label{fig:Ws-cont}$A_y$ as the graph of
      $\phi_y$ near $A_x$.}
\label{fig:phi}
  \end{figure}

Define the family of embeddings $\gamma:U_0\to{\rm Emb}^r(\D^{d_s},M)$ given by
\[
\gamma(y)(u)= Q_{x,0}(u,\phi_y(u)).
\] 
We claim that $y\mapsto \phi_y$ is continuous at $x$ in the $C^0$ topology, and hence the embedding $\gamma$ is continuous at $x$ in the $C^0$ topology.

It remains to verify the claim.
Suppose that $y_n\to x$.  By Arzel\`a-Ascoli,
we can pass to a further subsequence such that $\lim_{n\to\infty}\sup_{u\in\D^{d_s}}\|\phi_{y_n}(u)-\psi(u)\|=0$ for some
continuous function $\psi:\R^{d_s}\to\R^{d_{cu}}$.

Since $p_{y_n}\to0$, for $n$ large enough we have that $p_{y_n}\in D_{\frac12 C_2^{-5}\rho}$.  
Now fix $u\in\D^{d_s}$ (see Figure~\ref{fig:phi}).
Shrinking the disk $\D^{d_s}$, we can ensure that $q_n=(u,\phi_{y_n}(u))\in D_{\frac12 C_2^{-5}\rho}$ for $n$ sufficiently large.  
Hence 
\[
d(Q_{x,0}(q_n),y_n)\le d(Q_{x,0}(q_n),x)+d(x,y_n)
\le C_2^{-3}\rho\le C_2^{-1}\rho.
\]
By construction, $Q_{x,0}(q_n)\in W^s_{y_n}$, so by Lemma~\ref{lem:Ws}(a), 
\[
d(f^k\circ Q_{x,0}(q_n),f^ky_n)
\le C_2^2\nu^kd(Q_{x,0}(q_n),y_n)\quad\text{for all $k\ge0$}.
\]
Letting $n\to\infty$, we obtain that
\[
d(f^k\circ Q_{x,0}(u,\psi(u)),f^kx)
\le C_2^2\nu^kd(Q_{x,0}(u,\psi(u)),x)\quad\text{for all $k\ge0$}.
\]
By Lemma~\ref{lem:Ws}(b), 
$Q_{x,0}(u,\psi(u))\in W^s_x$ so $(u,\psi(u))\in A_x$.
It follows that $\psi(u)=\phi_x(u)$.  Hence all subsequential limits of $\phi_y$ (as $y\to x$) coincide with $\phi_x$ so $\lim_{y\to x}\phi_y=\phi_x$ in the
$C^0$ topology as required.
\end{proof}

\begin{lemma}\label{lem:Ws3}
The family of disks $\{W^s_x:x\in U_0\}$ defines a topological foliation.
\end{lemma}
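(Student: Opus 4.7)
The plan is to build a foliation chart at each $x_0\in U_0$ by parameterizing a neighborhood via a small transversal to $E^s_{x_0}$ together with the continuous family $\gamma$ from Lemma~\ref{lem:Ws2}, and then invoke Brouwer's invariance of domain.

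First I will fix $x_0\in U_0$ and choose a $C^1$ embedded $d_{cu}$-dimensional disk $D\subset U_0$ through $x_0$ with $T_{x_0}D$ complementary to $E^s_{x_0}$.  By continuity of the extended stable bundle (Proposition~\ref{p:hatEs}), together with $T_yW^s_y=E^s_y$, the leaf $W^s_y$ is transverse to $D$ at $y$ for every $y\in D$ close to $x_0$, so $W^s_y\cap D=\{y\}$ in a small neighborhood of $y$ in $D$.  Define $h:\D^{d_s}\times D\to M$ by $h(u,y)=\gamma(y)(u)$; this is continuous by Lemma~\ref{lem:Ws2}, and $h(0,x_0)=x_0$.

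The hard part is to prove that $h$ is injective on some neighborhood of $(0,x_0)$.  Suppose $h(u,y)=h(u',y')=z$ with all arguments close to $(0,x_0)$, so that $y,y',z$ are all close to $x_0$ and $z\in W^s_y\cap W^s_{y'}$.  Applying Lemma~\ref{lem:Ws}(a) to the pairs $(y,z)$ and $(y',z)$ and using the triangle inequality yields
\[
	d(f^ny,f^ny')\le C_2^2\nu^n\bigl(d(y,z)+d(y',z)\bigr)\quad\text{for all }n\ge 0.
\]
With the initial neighborhood small enough that $d(y,z)+d(y',z)<\rho/C_2^3$, Lemma~\ref{lem:Ws}(b) applied with the constant $C=C_2^2\bigl(d(y,z)+d(y',z)\bigr)/d(y,y')$ gives $y'\in W^s_y$.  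The transversality of $W^s_y$ and $D$ at $y$ then forces $y=y'$, and since $\gamma(y)$ is an embedding we conclude $u=u'$.

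Finally, because $\D^{d_s}\times D$ and $M$ have the same dimension $d=d_s+d_{cu}$, Brouwer's invariance of domain upgrades the continuous injective map $h$ to a homeomorphism from a small neighborhood of $(0,x_0)$ onto an open neighborhood of $x_0$ in $M$.  Since the horizontal slice $\D^{d_s}\times\{y\}$ is sent to an open subdisk of $W^s_y$, this is exactly a foliation chart at $x_0$, and letting $x_0$ range over $U_0$ gives the topological foliation.  The main obstacle is the quantitative bookkeeping just above: the constant $C$ in Lemma~\ref{lem:Ws}(b) ostensibly depends on $d(y,y')$, the very quantity one wishes to control, but after substitution the hypothesis $d(y,y')<C_2^{-1}C^{-1}\rho$ reduces to the single scale condition $d(y,z)+d(y',z)<\rho/C_2^3$, which is automatic once the chart is sufficiently small.
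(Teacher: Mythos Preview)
Your proof is correct and follows the same overall architecture as the paper: pick a $d_{cu}$-dimensional transversal through $x_0$, build a chart by composing the parametrisation of the transversal with the continuous family $\gamma$ from Lemma~\ref{lem:Ws2}, check continuity, prove injectivity, and (implicitly in the paper, explicitly in your write-up) invoke invariance of domain.

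The genuine difference is in the injectivity step. The paper, having found $z\in W^s_{x_1}\cap W^s_{x_2}$, runs an open--closed argument on $A=W^s_{x_1}\cap W^s_{x_2}$: it shows $A$ is closed (trivially) and open in $W^s_{x_1}$ via an estimate of the form $d(f^nz',f^nx_2)\le(3+8C)C_2^2\nu^n d(z',x_2)$ for $z'$ near $z$, then uses connectedness of $W^s_{x_1}$ to conclude $W^s_{x_1}\subset W^s_{x_2}$ and hence $x_1=x_2$ by transversality. You instead go straight for the base points: the triangle inequality through the common point $z$ plus two applications of Lemma~\ref{lem:Ws}(a) give exponential contraction of $d(f^ny,f^ny')$, and a single application of Lemma~\ref{lem:Ws}(b) with the data-dependent constant $C$ yields $y'\in W^s_y$ directly. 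Your observation that the size hypothesis in Lemma~\ref{lem:Ws}(b) collapses to the $y,y'$-independent condition $d(y,z)+d(y',z)<\rho/C_2^3$ is exactly what makes this work. This route is shorter and avoids the connectedness argument entirely; the paper's route, on the other hand, establishes the slightly stronger intermediate fact that the two leaves coincide as sets, not merely that the base points agree.
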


\begin{proof}
Let $x\in U_0$ and choose an embedded $d_{cu}$-dimensional disk $Y\subset M$ containing $x$ and transverse to $W^s_x$.   By continuity of $E^s$, we can shrink $Y$ so that $Y$ is transverse to $W^s_y$ at $y$ for all $y\in Y$.
Let $\psi:\D^{cu}\to Y$ be a choice of embedding.

Now define $\chi:\D^s\times\D^{cu}\to U_0$ by setting 
$\chi(u,v)=\gamma(\psi(v))(u)$.
Note that $\chi$ maps horizontal lines $\{v={\rm const.}\}$ homeomorphically onto stable disks; see Figure~\ref{fig:topolchart}.
 \begin{figure}[htpb]
    \centering
    \includegraphics[width=8cm]{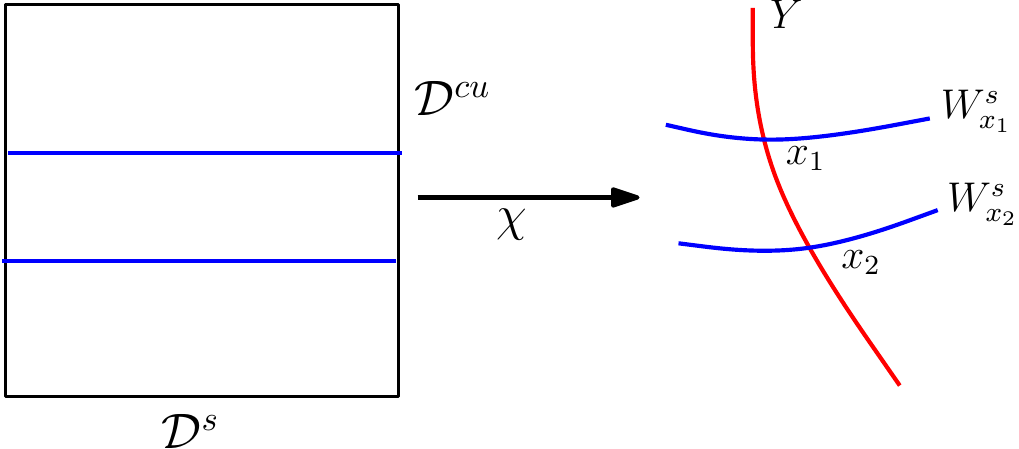}
    \caption{\label{fig:topolchart}Topological foliation chart}
  \end{figure}

By Lemma~\ref{lem:Ws2},
each of these embeddings is Lipschitz with uniform Lipschitz constant $L$.   Using this together with the continuity statement in Lemma~\ref{lem:Ws2}, 
\begin{align*}
d(\chi(u,v),\chi(u_0,v_0))& \le d(\gamma(\psi(v))(u),\gamma(\psi(v))(u_0))
+d(\gamma(\psi(v))(u_0),\gamma(\psi(v_0))(u_0)) \\
& \le L\|u-u_0\|+\|\gamma(\psi(v))-\gamma(\psi(v_0))\|_{C^0}\to0,
\end{align*}
as $(u,v)\to(u_0,v_0)$, establishing continuity of $\chi$.

Suppose that $\chi(u_1,v_1)=\chi(u_2,v_2)$ with common value $y\in U_0$.
Then $y\in W^s_{x_1}\cap W^s_{x_2}$ where $x_j=\psi(v_j)$.
We claim that $x_1=x_2$
with common value $\hat x$.  In particular $v_1=v_2$.
But now $\gamma(\hat x)(u_1)=\gamma(\hat x)(u_2)$ and so $u_1=u_2$.
It follows that $\chi$ is injective and hence is a homeomorphism onto a neighborhood of $x$ as required.

It remains to prove the claim.
Note that $W^s_{x_2}$ can be viewed as a graph over $W^s_{x_1}$.
Let $A=W^s_{x_1}\cap W^s_{x_2}$.  We show that $A$ is open and closed in $W^s_{x_1}$.   Since $y\in A$ and $W^s_{x_1}$ is connected, $A=W^s_{x_1}$ and in particular, $x_2=x_1$ as required.

It is clear that $A$ is closed in $W^s_{x_1}$.  To prove that $A$ is open, suppose that $z\in A$.  Since $W^s_{x_j}$ are tangent to $E^s_{x_j}$ with uniform Lipschitz constant, there exists $C>0$ such that
$d(x_1,x_2)\le Cd(z,x_j)$.
 Let $z'\in W^s_{x_1}$ be such that $d(z,z')\le (1/2C)d(x_1,x_2)$.   (This implies that $d(x_1,x_2)\le 2Cd(z',x_2)$.)  We must show that $z'\in A$.
Now
\begin{align*}
& d(f^nz',f^nx_2)  \le d(f^nz',f^nx_1)+d(f^nx_1,f^nz)+d(f^nz,f^nx_2)
 \\ & \le C_2^2\nu^n\{d(z',x_1)+d(x_1,z)+d(z,x_2)\}
\\ & \le C_2^2\nu^n\{d(z',x_2)+d(x_2,x_1)\;+\;d(x_1,x_2)+d(x_2,z')+d(z',z)\;+\;d(z,z')+d(z',x_2)\}
\\ & = C_2^2\nu^n\{3d(z',x_2)+2d(x_1,x_2)+2d(z,z')\}
\le C_2^2\nu^n\{3d(z',x_2)+4d(x_1,x_2)\} \\ &
\le (3+8C)C_2^2\nu^nd(z',x_2).
\end{align*}
We can arrange that $\chi$ takes values in $B_\eps(x)$ where $\eps$ is as small as required.  By Lemma~\ref{lem:Ws}(b), $z'\in W^s(x_2)$ and hence $z'\in A$ completing the proof.
\end{proof}

\begin{corollary} \label{cor:Ws}
There exists $\eps>0$ such that
$X_t(W^s_x\cap B_\eps(x))\subset W^s_{X_tx}$ for all $t\ge0$, $x\in U_0$.
\end{corollary}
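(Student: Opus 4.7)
The plan is to reduce the continuous-time invariance of the stable foliation to the discrete-time invariance under $f=X_T$ already established in Lemma~\ref{lem:Ws}(c), using uniform Lipschitz control of the flow $X_r$ for $r\in[0,T]$.

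First I would fix $\eps>0$ small (with the specific bound determined below) and let $y\in W^s_x\cap B_\eps(x)$. By Lemma~\ref{lem:Ws}(a) we have $d(f^nx,f^ny)\le C_2^2\nu^n d(x,y)$ for every $n\ge0$, so the iterates stay within the threshold required by Lemma~\ref{lem:Ws}(c) provided $\eps$ is small enough. Iterating Lemma~\ref{lem:Ws}(c) then yields $X_{nT}y=f^ny\in W^s_{f^nx}=W^s_{X_{nT}x}$ for every integer $n\ge0$.

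For a general $t\ge0$, write $t=nT+r$ with $n\ge0$ and $r\in[0,T)$, and set $z=X_{nT}x$, $w=X_{nT}y$; by the previous step $w\in W^s_z$ with $d(z,w)\le C_2^2\nu^n\eps$. Let $z'=X_rz=X_tx$ and $w'=X_rw=X_ty$; the goal is $w'\in W^s_{z'}$, which I would certify via Lemma~\ref{lem:Ws}(b). By smoothness of the flow and compactness of $[0,T]$, there is a uniform constant $K\ge1$ such that both $X_s$ and $X_{-s}$ are $K$-Lipschitz on pairs of sufficiently close points in $U_0$ for every $s\in[0,T]$. Using the commutation $X_{mT}\circ X_r=X_r\circ X_{mT}$ together with Lemma~\ref{lem:Ws}(a),
\[
d(f^mz',f^mw')=d(X_rf^mz,X_rf^mw)\le K\,d(f^mz,f^mw)\le KC_2^2\nu^m d(z,w)\le K^2C_2^2\nu^m d(z',w'),
\]
so $d(f^mz',f^mw')\le C\nu^m d(z',w')$ with $C=K^2C_2^2$. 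Moreover $d(z',w')\le K\,d(z,w)\le KC_2^2\nu^n\eps$, which is bounded by $C_2^{-1}C^{-1}\rho$ once $\eps$ is taken small enough, uniformly in $n$. Lemma~\ref{lem:Ws}(b) then gives $w'\in W^s_{z'}$, i.e., $X_ty\in W^s_{X_tx}$.

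The main obstacle is precisely the fractional time $r$: the stable manifolds have only been shown to be $f$-invariant for the time-$T$ map, so invariance under the flow for intermediate times is not a priori automatic. The remedy is the converse direction of Lemma~\ref{lem:Ws}(b), which lets us certify $w'\in W^s_{z'}$ from nothing more than an exponential contraction estimate under iteration of $f$; such an estimate survives the short-time flow $X_r$ at the cost of a bounded multiplicative loss $K$, which is absorbed into the constants once $\eps$ is sufficiently small.
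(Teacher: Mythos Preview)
Your proof is correct and follows essentially the same approach as the paper: iterate Lemma~\ref{lem:Ws}(c) for the discrete time-$T$ map using the contraction estimate from Lemma~\ref{lem:Ws}(a), then handle the fractional time $r\in[0,T)$ by using uniform Lipschitz control of $X_{\pm r}$ and certifying membership in the target leaf via Lemma~\ref{lem:Ws}(b). The only organizational difference is that the paper first establishes $X_r$-invariance for all $r\in[0,n_0T]$ directly from the original point $(x,y)$ (choosing $n_0$ so that $C_2^2\nu^{n_0}<1$, which makes $f^{n_0}$ map $B_\eps(x)\cap W^s_x$ back into an $\eps$-ball), and then composes with the clean induction on $f^{kn_0}$; you instead first iterate to time $nT$ and then handle the remainder $r\in[0,T)$ from the point $(z,w)=(f^nx,f^ny)$. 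Both orderings work and use the same three lemmas in the same roles.
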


\begin{proof}
Choose $n_0\ge1$ such that $C_2^2\nu^{n_0}<1$.  Shrinking $\eps$, it follows from
Lemma~\ref{lem:Ws}(a,c) that 
$f^{n_0}(W^s_x\cap B_\eps(x))\subset W^s_{f^{n_0}x}\cap B_\eps(f^{n_0}x)$ and inductively that
$f^{kn_0}(W^s_x\cap B_\eps(x))\subset W^s_{f^{kn_0}x}\cap B_\eps(f^{kn_0}x)$ for all $k\ge0$.

Next choose $C\ge1$ such that $d(X_rx,X_ry)\le Cd(x,y)$ for all $x,y\in U_0$, $r\in[-n_0T,n_0T]$.
Suppose that $y\in W^s_x$ and let $x'=X_rx$, $y'=X_ry$.  By Lemma~\ref{lem:Ws}(a),
for $y$ sufficiently close to $x$,
\[
d(f^nx',f^ny')=d(X_rf^nx,X_rf^ny)\le Cd(f^nx,f^ny)\le CC_2^2\nu^nd(x,y)\le C^2C_2^2\nu^nd(x',y'),
\]
for all $n\ge0$.
By Lemma~\ref{lem:Ws}(b), $X_ry\in W^s_{X_rx}$ for $y$ sufficiently close to $x$.
Hence there exists $\eps>0$ such that $X_r(W^s_x\cap B_\eps(x))\subset W^s_{X_rx}$ for all $r\in[0,n_0T]$, $x\in U_0$.

The result for general $t$ follows by writing
$t=kn_0T+r$ where $k\ge0$, $r\in[0,n_0T)$.~
\end{proof}

Recall that $f=X_T$.  Choose $C$ such that
$\sup_{r\in[0,T]}d(X_rx,X_ry)\le Cd(x,y)$ for all $x,y\in U$.
Write $t=nT+r$, $n\ge0$, $r\in[0,T)$.
By Lemma~\ref{lem:Ws}(a),
if $d(x,y)<C_2^{-1}\rho$ and $y\in W^s_x$, then 
\[
d(X_tx,X_ty)=d(X_{nT+r}x,X_{nT+r}y)\le C_2^2C\nu^nd(x,y)
\le C'\tilde\nu^t d(x,y),
\]
where $C'=C_2^2C\nu^{-1}$ and $\tilde\nu=\nu^{1/T}$.

Passing to an adapted metric, we can arrange that there are constants $\eps>0$, $\nu\in(0,1)$ such that if $d(x,y)<\eps$
and $y\in W^s_x$, then $d(X_tx,X_ty)\le \nu^t d(x,y)$ for all $t\ge0$.
From now on, we write $W^s_x$ instead of $W^s_x\cap B_\eps(x)$.
With this notation, Corollary~\ref{cor:Ws} states that 
$X_t(W^s_x)\subset W^s_{X_tx}$ for all $x\in U_0$, $t\ge0$.

This completes the proof of Theorem~\ref{th:Ws}.

\subsection{Regularity of the stable foliation}
\label{sec:reg}

In this subsection, we prove results on the regularity of the stable foliation
$\{W^s_x\}$ in a neighborhood of $\Lambda$ under an appropriate bunching condition.
We follow \cite[Theorem 6.5]{HP70}, adapting and applying
the results of \cite{HPS77} in our setting. 

We continue to suppose that $X_t$ is the flow generated by a $C^r$ vector field $G$ where $r\ge1$.   
Let $q\in[0,r]$.  We suppose that there exists $t>0$
such that the following bunching condition holds:
\begin{align}\label{eq:bunching}
  \|DX_t\mid E^s_x\|\cdot \|DX_{-t}\mid E^{cu}_{X_tx}\|
  \cdot\|DX_t\mid E^{cu}_x\|^q <1
  \quad\text{for all $x\in \Lambda$}.
\end{align}

Let $T_{U_0}M=E^s\oplus E^{cu}$ be a continuous extension of the underlying splitting with $E^s$ invariant as in Proposition~\ref{p:hatEs}.
Choose $t$ as in~\eqref{eq:bunching} and let $f=X_t$.
Increasing $t$ and shrinking $U_0$ if necessary, we can ensure that
\begin{align}\label{eq:bunching2}
  \|Df\mid E^s_x\|\cdot \|Df^{-1}\mid E^{cu}_{fx}\| \le
  \|Df\mid E^s_x\|\cdot \|Df^{-1}\mid E^{cu}_{fx}\| 
\cdot\|Df\mid T_xM\|^q <1,
\end{align}
for all $x\in U_0$.

Let $T_{U_0}M=F^s\oplus F^{cu}$ be a $C^r$ approximation of the splitting
$T_{U_0}M=E^s\oplus E^{cu}$. 
For each $x\in U_0$, let $L(F^s_x,F^{cu}_x)$ denote the space of linear maps from
$F^s_x$ to $F^{cu}_x$, and let $\D_x$ denote the unit disk in
$L(F^s_x,F^{cu}_x)$.
 Define the corresponding disk bundle
$\cD_0=\{\D_x,\,x\in U_0\}$.

Let  $U_1=f(U_0)\subset U_0$ and set
$\cD_1=\{\D_x,\,x\in U_1\}$.
Let $h=f^{-1}|_{U_1}:U_1\to U_0$.  Since $h(U_1)=U_0\supset U_1$, the $C^r$ diffeomorphism $h$ is {\em overflowing} in the sense of~\cite[p.~30]{HPS77}.

Represent $Dh(x):T_xM\to T_{hx}M$ using the splitting $F^s\oplus F^{cu}$ by writing
\begin{align*}
  Dh(x)=\left(\begin{matrix}
    A_x & B_x\\C_x & D_x
  \end{matrix}\right): F_x^s\times F^{cu}_x\to F^s_{hx}\times
  F^{cu}_{hx}, \quad x\in U_1.
\end{align*}
We define the graph transform $\Gamma:\cD_1\to\cD_0$,
\begin{align*}
  \Gamma_x(\ell)=(C_x+D_x\ell)(A_x+B_x\ell)^{-1},
  \quad \ell\in\D_x,\, x\in U_1.
\end{align*}

\begin{lemma}   \label{lem:reg}
The neighborhood $U_0$ of $\Lambda$ and the $C^r$ splitting $F^s\oplus F^{cu}$ can be chosen so that
$\Gamma:\cD_1\to\cD_0$ is well-defined and 
$\Lip(\Gamma_x)\cdot\|Dh^{-1}|T_{hx}M\|^q<1$ for all $x\in U_1$.
\end{lemma}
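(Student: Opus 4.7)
The plan is to carry out the estimates first in the continuous invariant splitting $E^s\oplus E^{cu}$, where the block form of $Dh(x)$ is particularly simple, and then transfer to a nearby $C^r$ splitting $F^s\oplus F^{cu}$ by a continuity argument. The key observation is that with respect to $E^s\oplus E^{cu}$, the $Dh$-invariance of $E^s$ (Proposition~\ref{p:hatEs}) forces the lower-left block $C_x$ to vanish identically. Moreover, the upper-right block $B_x$, which measures the $E^s_{hx}$-component of $Dh(E^{cu}_x)$, vanishes on~$\Lambda$ (where the continuous extension $E^{cu}$ agrees with the $Df$-invariant bundle), so by continuity $\|B_x\|$ is uniformly small on $U_1$ once $U_0$ is shrunk close enough to $\Lambda$.

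With $C_x=0$ and $\|B_x\|$ small, the graph transform reduces to $\Gamma_x(\ell)=D_x\ell(A_x+B_x\ell)^{-1}$ with $\Gamma_x(0)=0$, and a Neumann series argument gives $\|(A_x+B_x\ell)^{-1}\|=(1+o(1))\|A_x^{-1}\|$ for $\ell\in\D_x$, delivering the well-definedness $\Gamma_x(\D_x)\subset\D_{hx}$. A direct computation of the derivative,
\[
D\Gamma_x(\ell)(\eta)=\bigl[D_x-\Gamma_x(\ell)B_x\bigr]\,\eta\,(A_x+B_x\ell)^{-1},
\]
yields $\Lip(\Gamma_x)\le(1+o(1))\|D_x\|\cdot\|A_x^{-1}\|$ on $\D_x$. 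Using the identities $\|A_x^{-1}\|=\|Df\mid E^s_{hx}\|$ (inverting the restriction $Dh\mid E^s_x$), $\|D_x\|\le(1+o(1))\|Df^{-1}\mid E^{cu}_x\|$ (an angle factor close to $1$ in an adapted metric, combined with the invariance of $E^{cu}$ on $\Lambda$), and $\|Dh^{-1}\mid T_{hx}M\|=\|Df\mid T_{hx}M\|$, setting $y=hx$ gives
\[
\Lip(\Gamma_x)\cdot\|Dh^{-1}\mid T_{hx}M\|^q
\le(1+o(1))\|Df\mid E^s_y\|\cdot\|Df^{-1}\mid E^{cu}_{fy}\|\cdot\|Df\mid T_yM\|^q,
\]
which by the bunching condition~\eqref{eq:bunching2} is strictly less than~$1$ once the $o(1)$ errors are absorbed into the slack.

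To pass to the $C^r$ splitting, choose $F^s\oplus F^{cu}$ so close to $E^s\oplus E^{cu}$ in $C^0$ that all four blocks $(A_x,B_x,C_x,D_x)$ of $Dh$ with respect to $F^s\oplus F^{cu}$ are uniformly close to their counterparts in the invariant splitting. In particular $C_x$ becomes small but nonzero, so $\Gamma_x(0)=C_xA_x^{-1}$ is merely small rather than zero, and the Lipschitz bound acquires an extra $o(1)$ perturbation. Both the well-definedness $\Gamma_x(\D_x)\subset\D_{hx}$ and the inequality $\Lip(\Gamma_x)\cdot\|Dh^{-1}\mid T_{hx}M\|^q<1$ persist under this perturbation by continuity of the graph transform formula in its coefficients.

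The principal obstacle is the simultaneous control of three independent small parameters: the non-orthogonality of $E^s\oplus E^{cu}$ (an angle factor close to~$1$), the non-invariance of the $E^{cu}$-extension off~$\Lambda$ (governing $\|B_x\|$), and the smoothness approximation (governing $\|C_x\|$ and $\|F^s-E^s\|_{C^0}$). Each introduces a multiplicative error, and all three must fit inside the strict slack provided by the bunching condition on the compact attractor. This is arranged by first fixing an adapted metric and shrinking $U_0$ toward $\Lambda$, and only then selecting $F^s\oplus F^{cu}$ sufficiently close to $E^s\oplus E^{cu}$.
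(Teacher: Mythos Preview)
Your proposal is correct and follows essentially the same route as the paper: exploit invariance of $E^s$ (making $C_x$ small) and near-invariance of $E^{cu}$ on $\Lambda$ (making $B_x$ small) to bound $\Lip(\Gamma_x)$ by approximately $\|A_x^{-1}\|\|D_x\|\approx\|Df\mid E^s_{hx}\|\cdot\|Df^{-1}\mid E^{cu}_x\|$, then invoke the bunching condition~\eqref{eq:bunching2} at $y=hx$. The only cosmetic differences are that the paper works directly in the $F^s\oplus F^{cu}$ splitting (rather than first in $E^s\oplus E^{cu}$ and then perturbing) and bounds $\Gamma_x(\ell)-\Gamma_x(\ell')$ by a direct difference estimate instead of computing $D\Gamma_x(\ell)$; neither changes the substance of the argument.
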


\begin{proof}
By~\eqref{eq:bunching2}, we can choose $\lambda_x\in(0,1)$ such that
\[
  \|Df\mid E^s_x\|\cdot \|Df^{-1}\mid E^{cu}_{fx}\| <
\lambda_x \quad\text{and}\quad
\lambda_x\, \|Df\mid T_xM\|^q <1\quad\text{for all $x\in U_0$.}
\]
Since $f$ is $C^1$ and $\overline{U_0}$ is compact, there exists
$\delta\in(0,1)$ such that
$(\lambda_{hx}+2\delta)(1-\delta)^{-2}<1$ and
\begin{align} \label{eq:delta}
(\lambda_{hx}+2\delta)(1-\delta)^{-2} \|Dh^{-1}\mid T_{hx}M\|^q <1\quad\text{for all $x\in U_1$.}
\end{align}

Since $F^s$ is close to the $Df$-invariant contracting bundle $E^s$, we
can arrange that 
$\|C_x\|\le1$ and $\|A_x^{-1}\|\le 1$ for all $x\in U_1$.
Also, $F^{cu}$ is close to $E^{cu}$ which is invariant when restricted to $\Lambda$ so we can arrange that $\|B_x\|<\delta$.
Moreover, $A_x^{-1}$ is close to $Df\mid E^s_{hx}$ and
$D_x$ is close to $Df^{-1}\mid E^{cu}_x$ so we can ensure that
 $\|A_x^{-1}\|\|D_x\|\le\lambda_{hx}$
for all $x\in U_1$.

Let $\ell,\ell'\in \D_x$.
Note that $\|A_x^{-1}B_x\ell\|\le \delta$, so 
$\|(I+A_x^{-1}B_x\ell)^{-1}\|\le (1-\delta)^{-1}$.
Similarly, $\|(I+A_x^{-1}B_x\ell')^{-1}\|\le (1-\delta)^{-1}$.
It follows that
\begin{align*}
   \|(A_x+B_x\ell)^{-1}-  (A_x+B_x\ell' & )^{-1}  \| 
 =\|(A_x+B_x\ell)^{-1}(B_x(\ell'-\ell))(A_x+B_x\ell')^{-1}\| \\
  & \le \|A_x^{-1}\|^2\delta(1-\delta)^{-2}\|\ell'-\ell\|
   \le \|A_x^{-1}\|\delta(1-\delta)^{-2}\|\ell'-\ell\|.
\end{align*}
Hence
\begin{align*}
\|\Gamma_x(\ell)-\Gamma_x & (\ell')\|  
\le \|D_x(\ell-\ell')\| \|(A_x+B_x\ell)^{-1}\|
\\ &\qquad\qquad \qquad +\|(C_x+D_x\ell')\| \|(A_x+B_x\ell)^{-1}-(A_x+B_x\ell')^{-1}\| \\
& \le \|A_x\|^{-1}\|D_x\|(1-\delta)^{-1}\|\ell-\ell'\|
+ (1+\|D_x\|)\|A_x^{-1}\|\delta(1-\delta)^{-2}\|\ell-\ell'\| \\
& \le \lambda_{hx}(1-\delta)^{-1}\|\ell-\ell'\|
+ 2\delta(1-\delta)^{-2}\|\ell-\ell'\|,
\end{align*}
and so
$\Lip(\Gamma_x)\le (\lambda_{hx}+2\delta)(1-\delta)^{-2}$
for all $x\in U_1$.   In particular, $\Lip(\Gamma_x)<1$ so $\Gamma_x(\D_x)\subset \D_{hx}$, and hence $\Gamma$ is well-defined.
The result follows from this estimate combined with~\eqref{eq:delta}.
\end{proof}

\begin{theorem} \label{th:reg}
Let $q\in[0, [r]\,]$.
If condition~\eqref{eq:bunching} holds for some $t>0$,
then the bundle $E^s$ is $C^q$ over $U_1$.
That is, the map $x\mapsto E^s_x$ is a $C^q$ map from $U_1$ to $\cD_1$.
\end{theorem}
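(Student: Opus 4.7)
The strategy is to realize $E^s|_{U_1}$ as the graph of a continuous section of the disk bundle $\cD_1$ that is fixed by the graph transform $\Gamma$ of Lemma~\ref{lem:reg}, and then invoke the $C^r$ section theorem of Hirsch-Pugh-Shub~\cite{HPS77}. The bunching condition~\eqref{eq:bunching} has already been translated in Lemma~\ref{lem:reg} into exactly the contraction/expansion ratio required by that theorem; all that remains is to set up the identifications.

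First I would choose the $C^r$ splitting $T_{U_0}M=F^s\oplus F^{cu}$ so close to the continuous splitting $T_{U_0}M=E^s\oplus E^{cu}$ that, for every $x\in U_0$, the $d_s$-dimensional subspace $E^s_x$ is the graph of a unique linear map $\sigma(x):F^s_x\to F^{cu}_x$ with $\|\sigma(x)\|<1$, so that $\sigma(x)\in\D_x$. Continuity of $E^s$ (Proposition~\ref{p:hatEs}) gives a continuous section $\sigma:U_0\to\cD_0$. Next I would verify that $\sigma|_{U_1}$ is $\Gamma$-invariant: by definition of the blocks $A_x,B_x,C_x,D_x$ the differential $Dh(x)$ sends $\graph(\sigma(x))$ to $\graph(\Gamma_x(\sigma(x)))$, and since $E^s$ is $Df$-invariant on $U_0$ (Proposition~\ref{p:hatEs}), $Dh(x)E^s_x=E^s_{hx}=\graph(\sigma(hx))$. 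Uniqueness of the graph representation yields
\[
\Gamma_x(\sigma(x))=\sigma(hx) \quad \text{for all } x\in U_1.
\]

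Now I would apply the $C^q$ section theorem from~\cite[Theorem~3.5]{HPS77} in the overflowing version. The base map $h:U_1\to U_0$ is a $C^r$ diffeomorphism with $h(U_1)=U_0\supset U_1$, hence overflowing. The disk bundle $\cD_1$ is $C^r$ because $F^s,F^{cu}$ are $C^r$. The fiber map $\Gamma$ is fiber-Lipschitz with constant $\Lip(\Gamma_x)$, and the base expansion rate of $h$ at $x$ is $\|Dh^{-1}|T_{hx}M\|$. Lemma~\ref{lem:reg} provides the bunching bound
\[
\Lip(\Gamma_x)\cdot\|Dh^{-1}|T_{hx}M\|^q<1 \quad\text{for all }x\in U_1,
\]
which is exactly the hypothesis of the $C^q$ section theorem for integer $q\le[r]$. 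The theorem then states that the unique continuous $\Gamma$-invariant section on $U_1$ is of class $C^q$; hence $\sigma:U_1\to\cD_1$ is $C^q$. Since $F^s,F^{cu}$ are $C^r$ subbundles, the assignment $x\mapsto E^s_x=\graph(\sigma(x))$ is $C^q$ as a map into the Grassmannian bundle, which is the conclusion.

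The main obstacle is conceptual rather than computational: one must match the overflowing, fiber-contracting bundle setup of our graph transform with the precise hypotheses of the HPS section theorem (overflowing base, $C^r$ bundle structure, fiber-contraction rate multiplied by the $q$-th power of the base expansion rate bounded by $1$). Once the identification $E^s\leftrightarrow\sigma$ is made and Lemma~\ref{lem:reg} is in hand, the theorem applies verbatim and nothing further is needed. The only mild subtlety is the restriction $q\le[r]$, which reflects that the section theorem provides integer smoothness up to the ambient regularity.
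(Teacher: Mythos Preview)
Your approach is essentially identical to the paper's: identify $E^s$ with a continuous $\Gamma$-invariant section of the disk bundle and apply the $C^r$ section theorem~\cite[Theorem~3.5]{HPS77} using the $q$-sharpness furnished by Lemma~\ref{lem:reg}. The one omission is that you only treat integer $q$, whereas the statement allows any $q\in[0,[r]\,]$; the paper dispatches the non-integer case by citing~\cite[Remark~2, p.~38]{HPS77}, and you should do the same.
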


\begin{proof}
Recall that we can regard $E^s_x$ as the graph of an element $\omega\in L(F^s_x,F^{cu}_x)$ with $\|\omega\|$ as close to zero as desired.  In particular, $\|\omega\|\le1$, and hence
$E^s$ is identified with a continuous $Df$-invariant section of $\cD_1$.

Note that
$Dh(x)\graph(\ell)=\graph(\Gamma_x(\ell))$ for $\ell\in \D_x$. 
Since $h=df^{-1}$, it follows that
$E^s:U_1\to\cD_1$ is a continuous $\Gamma$-invariant section.

By Lemma~\ref{lem:reg}, the graph transform $\Gamma:\cD_1\to\cD_0$ defines 
a fiber contraction over the overflowing diffeomorphism $h:U_1\to U_0$,
and this fiber contraction is {\em $q$-sharp} in the terminology of~\cite{HPS77}.   
When $q$ is an integer,
we have verified the hypotheses of the ``$C^r$ section theorem''~\cite[Theorem~3.5]{HPS77} (with $q$ playing the role of $r$, and vector bundles replaced by disk bundles as in~\cite[Remark, p.~36]{HPS77}). 
It follows from~\cite[Theorem~3.5]{HPS77} that $E^s:U_1\to\cD_1$ is the unique continuous $\Gamma$-invariant section and moreover that this section is $C^q$.

This completes the proof in the case that $q$ is an integer.
The general case follows from~\cite[Remark~2, p.~38]{HPS77}.
\end{proof}

\begin{remark} \label{rmk:reg}
(a)  It is immediate from domination~\eqref{eq:domination} that condition~\eqref{eq:bunching} holds with $q=0$.   By smoothness of the flow and compactness of $\Lambda$, condition~\eqref{eq:bunching} holds for some $q>0$ and hence the stable bundle $E^s$ is at least H\"older over $U_1$.

\vspace{1ex}
\noindent(b) When $q\ge1$ in Theorem~\ref{th:reg}, it follows by Frobenius's Theorem (see for example~\cite[pp.\ 93--95]{AbrahamMarsden}) 
that the family of stable manifolds $\{W^s_x\}$ obtained in
Theorem~\ref{th:Ws} forms a $C^q$ foliation of
$U_1$ in the sense that the foliation charts are $C^q$.  Moreover the holonomy maps along the stable leaves are $C^q$ smooth.
(See~\cite[Section~6]{PSW97} for more details.)
For $q\in(0,1)$, it remains true that the holonomy maps are $C^q$~\cite{PSW97}.
%, but the stable foliation $\{W^s_x\}$ need not be H\"older continuous.
\end{remark}

\section{Strong dissipativity}
\label{sec:SD}

From now on, our results are specialised to flows.
In this section, we define strong dissipativity for sectional hyperbolic attractors.  This is a verifiable condition for smoothness of stable foliations, extending~\cite{AMV15} who proved strong dissipativity and hence smoothness of the stable foliation for the classical Lorenz attractor.  We recover the result of~\cite{AMV15} and moreover obtain an estimate for the smoothness.

Recall that $d_s=\dim E^s_x$.
Given $A=\{a_{ij}\}\in\R^{d\times d}$,
let $\|A\|_2=(\sum_{ij}a_{ij}^2)^{1/2}$.

\begin{definition} \label{def:SD}
Let $q>1/d_s$.
A partially hyperbolic attractor $\Lambda$ is {\em $q$-strongly dissipative} if
\begin{itemize}
\item[(a)] 
For every equilibrium $\sigma\in\Lambda$ (if any), the
  eigenvalues $\lambda_j$ of $DG(\sigma)$, ordered so that $\Re\lambda_1\le\Re\lambda_2\le \dots\le\Re\lambda_d$,
  satisfy
  $\Re(\lambda_1-\lambda_{d_s+1}+q\lambda_d)<0$.
\item[(b)] $\sup_{x\in\Lambda}\bigl\{\diver G(x)+(d_sq-1)\|(DG)(x)\|_2\bigr\}<0$.
\end{itemize}
\end{definition}

\begin{theorem} \label{th:SD} Let $\Lambda$ be a sectional hyperbolic attractor.
Suppose that $\Lambda$ is $q$-strongly dissipative for some $q\in(1/d_s,[r]\,]$.
Then there exists a neighborhood $U_0$ of $\Lambda$ such that the stable manifolds $\{W^s_x,\,x\in U_0\}$ define a $C^q$ foliation of $U_0$.
\end{theorem}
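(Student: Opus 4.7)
Plan: By Theorem~\ref{th:reg} together with Remark~\ref{rmk:reg}(b), it suffices to exhibit a single $t>0$ for which the bunching inequality~\eqref{eq:bunching}
\[
\|DX_t\mid E^s_x\|\cdot\|DX_{-t}\mid E^{cu}_{X_tx}\|\cdot\|DX_t\mid E^{cu}_x\|^q<1
\]
holds at every $x\in\Lambda$; Frobenius's theorem then upgrades the $C^q$ smoothness of $E^s$ on a neighborhood of $\Lambda$ to a $C^q$ foliation by the stable disks constructed in Theorem~\ref{th:Ws}. So everything reduces to producing this common $t$ from the hypothesis of $q$-strong dissipativity.

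I would verify bunching in two regimes, corresponding to the two parts of Definition~\ref{def:SD}. At an equilibrium $\sigma\in\Lambda$, hyperbolicity of $\sigma$ forces the partially hyperbolic splitting $E^s\oplus E^{cu}$ at $\sigma$ to be $DG(\sigma)$-invariant, with $E^s_\sigma$ corresponding to the most contracted eigendirections. Since $DX_t(\sigma)=\exp(tDG(\sigma))$, the three factors in~\eqref{eq:bunching} grow (to leading exponential order, ignoring polynomial corrections from Jordan blocks) like $e^{\Re\lambda_1 t}$, $e^{-\Re\lambda_{d_s+1}t}$ and $e^{\Re\lambda_d t}$, so Definition~\ref{def:SD}(a) makes their product decay exponentially in $t$. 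By continuity of the three quantities in $x$ and finiteness of the equilibrium set, bunching persists on a whole neighborhood $V$ of the equilibria in $\Lambda$ for all large enough $t$.

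On the compact complement $K=\Lambda\setminus V$, the plan is to combine condition~(b) with sectional expansion. Applying $|\det(DX_t|P)|\ge Ke^{\theta t}$ to a $2$-plane $P\subset E^{cu}_x$ containing the most contracted direction of $DX_t\mid E^{cu}_x$ yields
\[
\|DX_{-t}\mid E^{cu}_{X_tx}\|\le C_1\,e^{-\theta t}\,\|DX_t\mid E^{cu}_x\|,
\]
reducing bunching on $K$ to showing that $\|DX_t\mid E^s_x\|\cdot\|DX_t\mid E^{cu}_x\|^{q+1}$ grows no faster than $e^{\theta t}$. Integrating condition~(b) along orbits, using $|\det DX_t|=\exp\int_0^t\diver G$ and $\|DX_t\|\le\exp\int_0^t\|DG\|_2$, produces the scalar estimate
\[
|\det DX_t(x)|\cdot\|DX_t(x)\|^{d_sq-1}\le e^{-\eta t}
\]
for some $\eta>0$. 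Combining this with the splitting identity $|\det DX_t|\asymp|\det(DX_t\mid E^s)|\cdot|\det(DX_t\mid E^{cu})|$ (the angle between the continuous bundles $E^s$ and $E^{cu}$ is bounded below on $\Lambda$) and the trivial bound $\|DX_t\mid E^{cu}\|\le\|DX_t\|$ yields the required estimate on $K$ for $t$ sufficiently large.

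The hard part will be this last bookkeeping step. The exponent $d_sq-1$ in Definition~\ref{def:SD}(b) is rigged precisely so that $\|DX_t\|^{d_sq-1}$ absorbs $\|DX_t\mid E^s_x\|$ (once the latter is rewritten via the $|\det DX_t|$ identity) and simultaneously controls the extra power of $\|DX_t\mid E^{cu}_x\|$ left over after the sectional-expansion exchange, leaving a residue beaten by the factor $e^{\theta t}$. It is essential that~(b) involves the Frobenius norm $\|DG\|_2$ rather than the operator norm, since only the former provides a single uniform bound controlling $\|DX_t\|$ in all directions at once. Choosing a single $t$ large enough to make bunching hold on both $V$ and $K$ then completes the proof via Theorem~\ref{th:reg} and Remark~\ref{rmk:reg}(b).
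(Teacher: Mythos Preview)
Your plan has the right destination---verify~\eqref{eq:bunching} for some $t$ and invoke Theorem~\ref{th:reg} with Remark~\ref{rmk:reg}(b)---but the route via a spatial split $\Lambda=V\cup K$ and direct finite-time estimates does not close. First, continuity does not give a fixed neighborhood $V$ of the equilibria on which bunching holds for all large~$t$: if $x\neq\sigma$ is near~$\sigma$, then for large $t$ the orbit segment $X_{[0,t]}x$ has long since left any fixed neighborhood of $\sigma$, and $DX_t(x)$ is no longer controlled by $e^{tDG(\sigma)}$. So you cannot fix $V$ first and then pick a large $t$ on $K=\Lambda\setminus V$ while retaining bunching on $V$ for that same~$t$. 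Second, the sectional-expansion trade on $K$ is too lossy. After your reduction you need $\|DX_t\mid E^s_x\|\cdot\|DX_t\mid E^{cu}_x\|^{q+1}<Ce^{\theta t}$, but condition~(b) only bounds $|\det DX_t|\cdot\|DX_t\|^{d_sq-1}$ from \emph{above}, and there is no general inequality turning this into an upper bound on $\|DX_t\mid E^s_x\|$ via the determinant splitting. Even at the level of Lyapunov exponents (take $d_s=1$, $d=3$, a non-equilibrium ergodic measure with exponents $\chi_1<\chi_2=0<\chi_3$): your reduced inequality reads $\chi_1+(q+1)\chi_3<\theta$, whereas condition~(b) yields $\chi_1+q\chi_3<0$; with $\chi_1=-10$, $\chi_3=5$, $q=1.5$, $\theta=1$ the latter holds and the former fails. (Your remark on the Frobenius norm is also backwards: since $\|A\|\le\|A\|_2$, replacing $\|\cdot\|_2$ by the operator norm in~(b) would make the hypothesis \emph{less} restrictive, not more; the Frobenius norm appears only because it is explicit.)

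The paper avoids both problems by working measure-by-measure with Lyapunov exponents rather than pointwise with finite-time bounds. Set $\eta_t(x)$ equal to the logarithm of the bunching product; this is a continuous subadditive cocycle. For each flow-invariant ergodic probability measure $m$ one computes $\lim_{t\to\infty}t^{-1}\eta_t$ in terms of the Lyapunov exponents $\chi_1(m)\le\cdots\le\chi_d(m)$. If $m$ is the Dirac mass at an equilibrium this is exactly condition~(a). Otherwise, sectional expansion is used only to force the flow direction to be the slowest in $E^{cu}$, i.e.\ $\chi_{d_s+1}(m)=0$, so the middle factor drops out entirely; condition~(b) then applies directly via $\sum_j\chi_j(m)=\int\diver G\,dm$ and $\chi_d(m)\le\int\|DG\|_2\,dm$. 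The passage from ``$\lim t^{-1}\eta_t<0$ for every ergodic $m$'' to ``$\exp\eta_t(x)\le Ce^{-\beta t}$ uniformly on $\Lambda$'' is~\cite[Proposition~3.4]{arbieto2010}, a semi-uniform subadditive ergodic theorem; this is the missing ingredient that replaces your $V/K$ decomposition and produces a single~$t$.
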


\begin{proof}
For each $t\in\R$, we define $\eta_t:\Lambda\to\R$,
\[
\eta_t(x)=\log\big\{\|DX_t|E^s_x\|\cdot
\|DX_{-t}|E^{cu}_{X_tx}\|\cdot
\|DX_t|E^{cu}_x\|^q\big\}.
\]
Note that $\{\eta_t,\;t\in\R\}$ is a continuous family of
continuous functions each of which is subadditive, that is,
$\eta_{s+t}(x)\le \eta_s(x)+\eta_t(X_sx)$.

Let $\cM$ denote the set of flow-invariant ergodic probability measures on $\Lambda$.  We claim that for each $m\in\cM$, the limit
$\lim_{t\to\infty}t^{-1} \eta_t(x)$ exists and is negative for $m$-almost every $x\in\Lambda$.
It then follows from~\cite[Proposition~3.4]{arbieto2010}
that there exists constants $C,\beta>0$ such that $\exp\eta_t(x)\le Ce^{-\beta t}$ for all $t>0$, $x\in\Lambda$.
In particular, for $t$ sufficiently large,
$\exp\eta_t(x)<1$ for all $x\in\Lambda$.  Hence condition~\eqref{eq:bunching} is satisfied for such 
$t$ and the result follows from Theorem~\ref{th:reg} and
Remark~\ref{rmk:reg}.

It remains to verify the claim.
For each $m\in\cM$, we label the Lyapunov exponents 
\[
\chi_1(m)\le \chi_2(m)\le\dots\le\chi_d(m).
\]
Since $\Lambda$ is partially hyperbolic, the Lyapunov exponents $\chi_j(m)$,
$j=1,\dots,d_s$ are associated with $E^s$ and are negative,
while the remaining exponents are associated with $E^{cu}$.

For $m$-a.e. $x\in\Lambda$ we have
\begin{align*} 
& \lim_{t\to\infty}t^{-1} \log \|DX_t|E^s_x\|=\chi_1(m), \quad
 \lim_{t\to\infty}t^{-1} \log \|DX_{-t}|E^{cu}_{X_tx}\|=-\chi_{d_s+1}(m), \\
& \lim_{t\to\infty}t^{-1} \log \|DX_t|E^{cu}_x\|=
 \lim_{t\to\infty}t^{-1} \log \|DX_t\mid T_xM\|=
\chi_d(m).
\end{align*}
Hence, $m$-almost everywhere,
\begin{align*} % \label{eq-EFF}
\lim_{t\to\infty}t^{-1}\eta_t(x)=\chi_1(m)-\chi_{d_s+1}(m)+q\chi_d(m).
\end{align*}

If $m$ is a Dirac delta at an equilibrium $\sigma\in \Lambda$,
then $\chi_j(m)=\Re\lambda_j$ for $j=1,\dots,d$, where $\lambda_j$ are the eigenvalues of $DG(\sigma)$.
Hence, it is immediate from Definition~\ref{def:SD}(a) that
$\lim_{t\to\infty}t^{-1}\eta_t(\sigma)<0$.

If $m$ is not supported on an equilibrium, then there is a zero Lyapunov exponent in the flow direction.   Sectional expansion ensures that $\chi_{d_s+1}(m)=0$ and  $\chi_j(m)>0$ for $j=d_s+2,\dots,d$.
Hence, $m$-almost everywhere,
\begin{align*}
\lim_{t\to\infty}t^{-1}\eta_t(x) & \textstyle =\chi_1(m)+q\chi_d(m)
\le d_s^{-1}\sum_{j=1}^{d_s}\chi_j(m)+q\chi_d(m)
\\ & \textstyle = d_s^{-1}\bigl(\sum_{j=1}^{d_s}\chi_j(m)+d_sq\chi_d(m)\bigr)
\le d_s^{-1}\bigl(\sum_{j=1}^d\chi_j(m)+(d_sq-1)\chi_d(m)\bigr)
\\ & \textstyle = d_s^{-1}\lim_{t\to\infty}t^{-1} \bigl(\log |\det DX_t(x)|+(d_sq-1)
\log\|DX_t(x)\|\bigr)
\\ & \textstyle \le  d_s^{-1}\lim_{t\to\infty}t^{-1}\int_0^t \bigl(\diver DG(X_sx)+(d_sq-1) \|DG(X_sx)\|_2\bigr)\,ds
\\ & \textstyle \le  d_s^{-1}\sup_{x\in\Lambda}\big\{\diver DG(x)+(d_sq-1) \|DG(x)\|_2\bigr\}.
\end{align*}
By Definition~\ref{def:SD}(b), we again have that
$\lim_{t\to\infty}t^{-1} \eta_t(x)<0$ for $m$-almost every $x\in\Lambda$.
This completes the proof of the claim.
\end{proof}

\begin{remark}  If $\sup_\Lambda\diver G<0$, then condition (b) holds for
$q=d_s^{-1}+\eps$ for $\eps$ sufficiently small.  When $\dim M=3$, we have $d_s=1$ and hence we recover the result in~\cite[Lemma~2.2]{AMV15}.
For the classical Lorenz equations~\cite{Lorenz63}, we have
\[
\diver G\equiv -\textstyle{\frac{41}{3}}, \quad
\lambda_1\approx -22.83, \quad
\lambda_2=-\textstyle{\frac83}, \quad \lambda_3\approx 11.83,
\]
so the Lorenz attractor is $(1+\eps)$-strongly dissipative for 
$\eps>0$ sufficiently small.
Hence, the stable foliation is $C^{1+\eps}$ for the classical Lorenz attractor.
\end{remark}

In fact, we have:

\begin{corollary} 
The stable foliation for the classical Lorenz attractor is at least
$C^{1.278}$.
\end{corollary}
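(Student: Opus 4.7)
The plan is to apply Theorem~\ref{th:SD} with $q = 1.278$, so that it suffices to verify both conditions of $q$-strong dissipativity for the classical Lorenz attractor with $d_s=1$. Condition~(a) only needs to be checked at equilibria in $\Lambda$, and for the classical Lorenz equations the origin is the sole equilibrium in the attractor. Its eigenvalues come from diagonalising the linear block in the $(x_1,x_2)$-plane: $\lambda_{1,3} = \tfrac12(-11 \pm \sqrt{1201})$, together with $\lambda_2 = -\tfrac83$. Thus $\lambda_2 - \lambda_1 \approx 20.16$ and $\lambda_3 \approx 11.83$, giving the threshold $q < (\lambda_2-\lambda_1)/\lambda_3 \approx 1.704$, which easily accommodates $q=1.278$.

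The substantive work is condition~(b). Since $d_s=1$ and $\diver G \equiv -\tfrac{41}{3}$, this becomes $\|DG(x)\|_2 < \tfrac{41}{3(q-1)} \approx 49.16$ for every $x\in\Lambda$. A direct computation of the Jacobian gives
\begin{equation*}
\|DG(x)\|_2^2 \;=\; 2\sigma^2 + 1 + \beta^2 + 2x_1^2 + x_2^2 + (x_3-\rho)^2 \;=\; 201 + \tfrac{64}{9} + 2x_1^2 + x_2^2 + (x_3-28)^2,
\end{equation*}
so the task reduces to maximising $2x_1^2 + x_2^2 + (x_3-28)^2$ over the attractor. I would do this via the standard Lorenz Lyapunov function $L(x) = \rho x_1^2 + \sigma x_2^2 + \sigma(x_3-2\rho)^2$, for which a short computation gives $\dot L = -2\sigma\bigl(\rho x_1^2 + x_2^2 + \beta(x_3-\rho)^2 - \beta\rho^2\bigr)$. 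Consequently $\Lambda$ is contained in the sublevel set $\{L \le c^*\}$ where $c^*$ is the maximum of $L$ on the ellipsoid $E = \{\rho x_1^2 + x_2^2 + \beta(x_3-\rho)^2 \le \beta\rho^2\}$, a constrained optimisation solved by Lagrange multipliers.

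With this enclosure in hand, I would maximise $\|DG\|_2^2$ subject to the constraint $L \le c^*$, again by Lagrange multipliers, and verify that the resulting bound $B$ satisfies $(q-1)B < \tfrac{41}{3}$ for $q = 1.278$. If the single-Lyapunov estimate turns out to be slightly too loose, the natural refinement is to intersect the trapping region with additional invariant sets (for instance using auxiliary Lyapunov functions $K_1 = x_1^2 - 2\sigma x_3$ or $K_2 = x_2^2 + x_3^2 -$ const., which sharpen bounds on individual coordinates of $\Lambda$) and optimise over the intersection. The hard part of the proof is exactly this quantitative step: producing a trapping enclosure tight enough that $\sup_\Lambda \|DG\|_2 < 41/(3 \cdot 0.278)$. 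Once that explicit numerical bound is in place, Definition~\ref{def:SD} is satisfied with $q = 1.278$, and Theorem~\ref{th:SD} together with Remark~\ref{rmk:reg}(b) delivers the $C^{1.278}$-regularity of the stable foliation.
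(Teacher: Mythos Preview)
Your framework matches the paper's exactly: verify $q$-strong dissipativity with $d_s=1$, reduce condition~(b) to the inequality $\|DG\|_2 < 41/(3(q-1))$, and compute $\|DG(x)\|_2^2 = 208.12 + V$ with $V=2x_1^2+x_2^2+(x_3-28)^2$. Condition~(a) is also handled identically. The entire content of the corollary is the numerical bound on $V$, and here your proposal has a real gap.

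Your primary tool, the Lyapunov function $L=\rho x_1^2+\sigma x_2^2+\sigma(x_3-2\rho)^2$, yields a trapping ellipsoid that is far too large. Carrying out your prescription (maximise $L$ over the ellipsoid $E$ where $\dot L=0$, then maximise $V$ over the resulting sublevel set $\{L\le c^*\}$) gives $c^*\approx 3.3\times 10^4$ and then $\sup V$ on the order of $7000$, whereas $q=1.278$ requires $V\le 2208$. So this is not ``slightly too loose'' but off by a factor of three; the standard trapping ellipsoid cannot deliver the stated exponent, and your suggested auxiliary functions $K_1,K_2$ are not shown to close the gap. The paper instead imports two sharp estimates from the literature: first $x_2^2+(x_3-28)^2\le b^2r^2/(4(b-1))\le 836.27$ (Doering--Gibbon, Giacomini--Neukirch, Swinnerton-Dyer), and then the nontrivial bound $a\,x_1^2\le x_2^2+x_3^2$ with $a\ge 4.7644$, obtained in Swinnerton-Dyer by a tailored Lyapunov-type argument (solving $(10a-28)^2+a(20-\lambda)(2-\lambda)=0$ at $\lambda=11$). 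Combining these gives $x_1^2\le 680$ and hence $V\le 2197$, just under the threshold. That second estimate is the crucial ingredient your proposal is missing.
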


\begin{proof}  
By definition, $q$-strong dissipativity holds for any $q<\min\{q_1,q_2\}$ where
\begin{align*}
q_1 & =\frac{\lambda_2-\lambda_1}{\lambda_3}\approx 1.704, \quad
q_2  = 1-\frac{\diver G}{\sup\|DG\|_2} = 1+\frac{41}{3}\frac{1}{\sup\|DG\|_2}.
\end{align*}
Now 
\[
\textstyle \|DG(x)\|_2^2=201+\frac{64}{9}+2x_1^2+x_2^2+(x_3-28)^2
\le 208.12+V,
\]
where 
\[
V=2x_1^2+x_2^2+(x_3-28)^2.
\]
We claim that $V \le 2197$.  Then $q\ge 1.278$ as required.

In verifying the claim, we use the standard notation
$\sigma=10$, $b=\frac83$, $r=28$ for the parameters in 
the Lorenz equations.
Various authors~\cite{DoeringGibbon95,GiacominiNeukirch,Dyer01}
have established the estimate
\begin{align} \label{eq-various}
x_2^2+(x_3-r)^2\le R^2, \quad R^2= \frac{b^2r^2}{4(b-1)}=\frac{12544}{15}\le 836.27\,.
\end{align}
It follows that
\begin{align} \label{eq-square}
x_2^2+x_3^2\le (r+R)^2
\le  3239.7\,.
\end{align}

Next, by~\cite[Example~5]{Dyer01} (see also~\cite{GiacominiNeukirch}),
$ax_1^2\le (x_2^2+x_3^2)$ where $a>0$ is the largest root of
\[
(10a-28)^2+a(20-\lambda)(2-\lambda), \qquad\lambda=11.
\]
This yields
$a\ge 4.7644$, so using~\eqref{eq-square},
$x_1^2\le a^{-1}(x_2^2+x_3^2)\le  680$.
Combined with~\eqref{eq-various},
\[
V=2x_1^2+x_2^2+(x_3-28)^2 \le 1360+837=2197,
\]
proving the claim.
\end{proof}

\begin{remark}
It might seem that we could use~\cite[equation~(22)]{DoeringGibbon95}
to get a better estimate on $x_3$ and hence $V$.
Unfortunately this equation implies
\[
\textstyle \big(x_3-\frac12(r+\sigma)\big)^2\le\frac14(r+\sigma)^2,
\]
so $x_3\le r+\sigma=38$ which is clearly incorrect.
\end{remark}

% By~\cite{GiacominiNeukirch},
% a trapping region is given by ellipsoids of the form
% \[
% \frac{c-28}{10}x_1^2+x_2^2+(x_3-28)^2=R,
% \]
% provided $R\ge \frac{c^2b^2}{4(b-1)}$ where $b=8/3$.
% (See in particular~\cite[equation~(20) and Fig~10]{GiacominiNeukirch}.
% See also~\cite{Sparrow,Dyer01} for related results.)
% Taking $c=48$ we obtain 
% $\frac{c^2b^2}{4(b-1)}= 2457.6$.
% Hence $V\le 2457.6$ and so $q_2> 1.278$ as required.
% \end{proof}

%%%%%%%%%%%%%%%%%%%%%%%%%%%%%%%%%%%%%%%%%%%%%%%%%%%%%%%%%%%%%%%%%%

\def\cprime{$'$}

\end{document}